\newcommand{\C}{{\mathbb{C}}}
\newcommand{\N}{{\mathbb{N}}}
\newcommand{\R}{{\mathbb{R}}}
\newcommand{\Z}{{\mathbb{Z}}}
\newcommand{\E}{\mathrm{E}}
\renewcommand{\P}{\mathrm{P}}
\renewcommand{\d}{\mathrm{d}}
\newcommand{\e}{\mathrm{e}}
\newcommand{\Var}{\text{\rm Var}}
\newcommand{\lip}{\text{\rm Lip}}
\DeclareMathOperator{\Cov}{\text{\rm Cov}}
\DeclarePairedDelimiter\floor{\lfloor}{\rfloor}
\title[Fluctuation for SHE with H\"older coefficients]{Spatial fluctuation for stochastic heat equation with H\"older  coefficients}
\author{Carl Mueller and Fei Pu}
\date{}                                           % Activate to display a given date or no date
 \address[Carl Mueller]{Department of Mathematics, University of Rochester, Rochester, New
York 14627, USA\\
\textit{Email: carl.e.mueller@rochester.edu}
}
\address[Fei Pu]{Laboratory of Mathematics and Complex Systems,
School of Mathematical Sciences, Beijing Normal University, 100875, Beijing, China\\
\textit{Email: fei.pu@bnu.edu.cn}
}
\begin{document}
\newtheorem{stat}{Statement}[section]
\newtheorem{proposition}[stat]{Proposition}
\newtheorem*{prop}{Proposition}
\newtheorem{corollary}[stat]{Corollary}
\newtheorem{theorem}[stat]{Theorem}
\newtheorem{lemma}[stat]{Lemma}
\theoremstyle{definition}
\newtheorem{definition}[stat]{Definition}
\newtheorem*{cremark}{Remark}
\newtheorem{remark}[stat]{Remark}
\newtheorem*{OP}{Open Problem}
\newtheorem{example}[stat]{Example}
\newtheorem{nota}[stat]{Notation}
\numberwithin{equation}{section}
\maketitle

\begin{abstract}
           In this paper, we establish associativity, spatial ergodicity 
and a central limit theorem for certain nonnegative solutions to
           the stochastic heat equation $\partial_t u=\frac12\partial_x^2 u+ u^\gamma \xi$ with $\gamma\in (0, 1)$.
           When $\gamma=\frac12$, we derive a limit for the moment generating function of the spatial integral 
           and provide a lower bound on the spatial growth of the solution.            
\end{abstract}

\section{Introduction}

We study nonnegative solutions to the following stochastic 
heat equation.
\begin{align}\label{eq:SHE}
\begin{cases}
\partial_tu(t,x)=\frac12\partial_x^2u(t,x)+ u(t,x)^\gamma \xi(t,x), \quad t>0, x\in \R\\
u(0)\equiv1,
\end{cases}
\end{align}
with $\gamma\in (0, 1)$, where $\xi=\xi(t,x)$ denotes space-time 
white noise.  As usual, we regard \eqref{eq:SHE} as shorthand for the 
following mild equation.  
\begin{align}\label{eq:mild}
u(t,x)= 1+ \int_{[0,t]\times\R}p_{t-s}(x-y)u(s,y)^\gamma \xi(\d s\, \d y),
\end{align}
where $p_t(x)= (2\pi t)^{-1/2}\e^{-x^2/(2t)}$ is the heat kernel and the stochastic integral is in sense of Walsh \cite{Wal86}.  The 
existence of nonnegative weak solutions to \eqref{eq:SHE} was proved in 
\cite{MP92, Shi94} by tightness arguments.  When $\gamma=\frac12$, 
uniqueness in law among nonnegative solutions is established by the well-known exponential duality 
between $u(t,x)$ and nonnegative solutions $v(t,x)$ of the PDE
\begin{align*}
\partial_t v=\frac12 \partial_x^2v -\frac12 v^2.
\end{align*}
Indeed, the solution to \eqref{eq:SHE} with $\gamma=\frac12$ corresponds 
to the density of one-dimensional super Brownian motion (see Section III.4 
of \cite{Per02}).  For diffusion coefficient $|u(t,x)|^\gamma$ with $\frac34 <\gamma <1$, pathwise uniqueness among 
all solutions was proved in \cite{MP11}.

%\red{Wasn't pathwise uniqueness proved without any condition on nonnegativity?}

When $\frac12 <\gamma <1$ and for $|u(0,\cdot)|$ rapidly decreasing,  
Mytnik \cite{Myt98} extended exponential duality and hence proved 
uniqueness in law among nonnegative solutions.  Since our initial data is 
not rapidly decreasing, we cannot assume uniqueness in law for 
$\frac{1}{2}<\gamma\le\frac{3}{4}$.
%(\textcolor{red}{I check the reference \cite{Myt98} and it considers the initial conditions rapidly decreasing in $x$, which does not include our initial condition}). 

To cover all of these possibilities, we define $\mathbf{U}_\gamma$ to be 
the set of nonnegative weak solutions to \eqref{eq:SHE}.  

We are interested in the spatial asymptotic behavior of the solution to \eqref{eq:SHE}.  For stochastic heat equation with diffusion coefficient
$\sigma(u)$, where $\sigma$ is a Lipchitz continuous function on $\R$, Huang, Nualart and Viitasaari \cite{HNV20} initiated the study of 
central limit theorem for the spatial average of the solution using Malliavin-Stein's method.  Fix $T>0$.  According to \cite[Theorem 1.2]{HNV20},  as $N\to\infty$,
          \begin{align*}
         \left\{ \frac{1}{\sqrt{N}}\int_0^{N}[u(t, x) -1] \, \d x\right\}_{t\in [0, T]}	\xrightarrow{C[0,T]} \left\{\int_0^{t}\sqrt{\E[\sigma(u(s, 0))^2]}\, \d {\rm B}_s\right\}_{t\in [0, T]}
          \end{align*}
where ${\rm B}$ denotes  Brownian motion and ``$\xrightarrow{C[0,T]}$'' denotes the convergence in law in the space of continuous functions $C[0,T]$. Since the work of Huang, Nualart and Viitasaari \cite{HNV20},  there has been a rapidly growing research on CLT for SPDEs with Lipschitz continuous diffusion coefficient; see, for instance, \cite{BNZ21,DNZ20, Ebi24, Ebi25, HNVZ20, NXZ22} and references therein.
Meanwhile, spatial ergodicity for stochastic heat equation Lipschitz continuous diffusion coefficient has been established by Chen et al. \cite{CKNP21} using Poincar\'e inequality.  See also \cite{NZ20} for spatial ergodicity of stochastic wave equations in dimensions $1, 2$ and $3$.

Li and Pu \cite{LP23} consider the square root diffusion coefficient. Using the Laplace functional of super Brownian motion, they show that 
as a process in time, 
the normalization of the spatial average of the solution to \eqref{eq:SHE}  converges in law to the standard Brownian motion in the space of continuous functions; see \cite[Theorem 1.1]{LP23}. However, the spatial ergodicity for the density of super Brownian motion is unknown.

We are aiming to establish the spatial ergodicity and CLT for the solution to \eqref{eq:SHE}.

\begin{theorem}\label{th:CLT}
          Suppose $\gamma=\frac{1}{2}$ or $\gamma\in(\frac{3}{4},1)$, and 
let $u\in\mathbf{U}_\gamma$.  Then the following assertions hold.  If 
$\gamma\in (0, \frac12)\cup(\frac{1}{2},\frac{3}{4}]$, then there exists 
$u\in\mathbf{U}_\gamma$ such that these assertions hold.  
\begin{enumerate}
\item For each $t>0$, we have that $u(t,\cdot)$ is spatially ergodic.
\item  Fix $T>0$. As $N\to\infty$, 
          \begin{align*}
         \left\{ \frac{1}{\sqrt{N}}\int_0^{N}[u(t, x) -1] \, \d x\right\}_{t\in [0, T]}	\xrightarrow{C[0,T]} \left\{\int_0^{t}\sqrt{\E[u(s, 0)^{2\gamma}]}\, \d {\rm B}_s\right\}_{t\in [0, T]}.
          \end{align*}
\end{enumerate}
\end{theorem}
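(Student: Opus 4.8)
The plan is to deduce both assertions from two structural facts about the solution: spatial stationarity and \emph{positive association} (the FKG property). Since the initial datum $u(0)\equiv 1$ is constant and space-time white noise is invariant under spatial shifts, the field $x\mapsto u(t,x)$ is stationary for each $t>0$; where uniqueness in law holds (for $\gamma=\frac12$ and $\gamma\in(\frac34,1)$) this is automatic for every $u\in\mathbf{U}_\gamma$, and in the remaining range I would instead \emph{build} a stationary, associated solution as a limit of spatial discretizations, which is precisely why the statement only asserts existence of a good $u$ there. The basic analytic object is the spatial average, which by Walsh's stochastic Fubini theorem applied to the mild formulation \eqref{eq:mild} becomes a stochastic integral
\begin{align*}
\frac{1}{\sqrt N}\int_0^N[u(t,x)-1]\,\d x=\frac{1}{\sqrt N}\int_{[0,t]\times\R}g_{N,t}(s,y)\,u(s,y)^\gamma\,\xi(\d s\,\d y),
\end{align*}
where $g_{N,t}(s,y)=\int_0^N p_{t-s}(x-y)\,\d x$. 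Alongside it I introduce the genuine $t$-martingale $M_N(t)=\frac{1}{\sqrt N}\int_{[0,t]\times\R}\mathbf 1_{[0,N]}(y)\,u(s,y)^\gamma\,\xi(\d s\,\d y)$, whose integrand no longer depends on $t$.

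For assertion (1), I would first record the moment bounds: $\E[u(t,x)]=1$, and since $2\gamma\le 2$ gives $\E[u(s,y)^{2\gamma}]\le(\E[u(s,y)^2])^\gamma$, the second moments are finite and bounded on $[0,T]$ by a Gronwall argument on the mild equation. The It\^o isometry then yields the two-point function
\begin{align*}
\Cov(u(t,0),u(t,x))=\int_0^t\E[u(s,0)^{2\gamma}]\,p_{2(t-s)}(x)\,\d s,
\end{align*}
which tends to $0$ as $|x|\to\infty$ and satisfies $\int_\R\Cov(u(t,0),u(t,x))\,\d x=\int_0^t\E[u(s,0)^{2\gamma}]\,\d s<\infty$. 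The decisive step is association: I would prove it by discretizing \eqref{eq:SHE} in space into a finite system of It\^o SDEs driven by independent Brownian motions, in which the discrete Laplacian supplies a cooperative (nonnegative off-diagonal) drift coupling and the noise is diagonal, so the system is positively associated by the standard monotonicity criterion for associated diffusions; association is preserved under convergence in law, so $u(t,\cdot)$ inherits it. Given association together with the covariance decay, Newman's inequality controls the covariance of any pair of Lipschitz local functionals of $u(t,\cdot)$ by the field's own covariances, and the integrability above furnishes the spatial mixing condition that yields ergodicity of $u(t,\cdot)$.

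For assertion (2), I would run a functional martingale central limit theorem for $M_N$. Its quadratic variation is
\begin{align*}
\langle M_N\rangle_t=\frac1N\int_0^t\int_0^N u(s,y)^{2\gamma}\,\d y\,\d s,
\end{align*}
and since $y\mapsto u(s,y)^{2\gamma}$ is a measurable function of the ergodic field $u(s,\cdot)$, assertion (1) and the spatial ergodic theorem give $\frac1N\int_0^N u(s,y)^{2\gamma}\,\d y\to\E[u(s,0)^{2\gamma}]$ in $L^1$; integrating in $s$ and dominating with the moment bounds yields $\langle M_N\rangle_t\to\int_0^t\E[u(s,0)^{2\gamma}]\,\d s$ in probability, a deterministic continuous limit. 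Because the $M_N$ are continuous martingales the Lindeberg/jump condition is vacuous, so the martingale CLT gives $M_N\xrightarrow{C[0,T]}\int_0^\cdot\sqrt{\E[u(s,0)^{2\gamma}]}\,\d{\rm B}_s$. It then remains to show the boundary correction is negligible: the It\^o isometry with the scaling estimate $\int_\R[g_{N,t}(s,y)-\mathbf 1_{[0,N]}(y)]^2\,\d y=O(\sqrt{t-s})$ (the discrepancy lives in boundary layers of width $\sqrt{t-s}$) gives
\begin{align*}
\E\Big[\Big(\tfrac{1}{\sqrt N}\int_0^N[u(t,x)-1]\,\d x-M_N(t)\Big)^2\Big]\lesssim\frac1N\int_0^t\E[u(s,0)^{2\gamma}]\sqrt{t-s}\,\d s=O(N^{-1}),
\end{align*}
which, after upgrading to uniform-in-$t$ smallness via a Kolmogorov-type modulus estimate for the difference, shows the two processes share the same weak limit and completes (2).

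The main obstacle is establishing association for the genuinely non-Lipschitz coefficient $u^\gamma$: the Malliavin--Poincar\'e route of \cite{CKNP21} is unavailable, so everything rests on proving the FKG property for the discretization and, more delicately, on showing that the discretizations converge in law to a solution in $\mathbf{U}_\gamma$ despite the absence of pathwise uniqueness when $\gamma<\frac12$ or $\frac12<\gamma\le\frac34$. Controlling the $u^\gamma$ nonlinearity in this limit---so that the limit is simultaneously a bona fide nonnegative solution, spatially stationary, and associated---is exactly where the H\"older regularity bites, and it is the reason assertions (1) and (2) are phrased as existence results off the uniqueness range.
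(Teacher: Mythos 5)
Your overall architecture --- stationarity plus positive association plus Gaussian covariance decay as the engine for both ergodicity and the CLT --- is the paper's, and your ergodicity argument (covariance bound from the It\^o isometry, then the Newman-type inequality \eqref{Newman2} to pass from the two-point function to general nondecreasing/Lipschitz functionals) matches Lemma \ref{lem:cov} and the argument in Section \ref{sec:CLT}. The one genuine gap is in how you propose to obtain association. The paper does not discretize in space: it regularizes the coefficient, replacing $|x|^\gamma$ by Lipschitz functions $\sigma_n\ge 0$ with $\sigma_n(0)=0$, invokes \cite[Theorem A.4]{CKNP23} to get association of each approximating solution $u_n$, and then uses the tightness argument of \cite{MP11} to extract an a.s.\ convergent subsequence in $C(\R_+,C_{tem}(\R))$ whose limit is a nonnegative solution; association passes to the limit by \cite[(P$_5$)]{EPW67}. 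Your route through a spatially discretized system of SDEs with diagonal noise coefficients $u_i^\gamma$ has two unresolved problems: the monotonicity criterion for associated diffusions is established under regularity and well-posedness hypotheses that fail for the non-Lipschitz coefficient $u^\gamma$ (so you would have to smooth the coefficient anyway), and the convergence of the lattice approximation to an element of $\mathbf{U}_\gamma$ is not available off the shelf --- it amounts to redoing the existence theory, and the known existence proofs proceed by coefficient smoothing, not spatial discretization. You flag this as the main obstacle, but it is not resolved, and resolving it by switching to coefficient smoothing is essentially the paper's proof of Theorem \ref{th:association}.

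Your CLT argument, by contrast, is a genuinely different and viable route. The paper proves tightness via Proposition \ref{prop:3.2}, identifies the finite-dimensional limits by writing $\sum_\ell a_\ell(S_{N,t_\ell}-N)$ as a sum over a stationary \emph{associated} sequence of unit blocks (associated only when all $a_\ell\ge0$), applies Newman's CLT \cite[Theorem 2]{New80}, and then removes the sign restriction through a subsequence argument and the complex-analytic Lemma \ref{Gauss-linear}. You instead compare the spatial average with the continuous martingale $M_N$, use the spatial ergodic theorem from part (1) to get convergence in probability of $\langle M_N\rangle_t$ to a deterministic limit, invoke the functional martingale CLT, and kill the boundary correction in $L^2$ via the It\^o isometry and the $O(\sqrt{t-s})$ boundary-layer estimate. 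This bypasses Newman's theorem, the Cram\'er--Wold sign issue, and the appendix lemma, and yields the functional convergence in one stroke; its cost is that it still leans on association (through part (1)) for the bracket convergence, and you still owe a tightness/modulus bound for the correction term of exactly the flavor of Proposition \ref{prop:3.2} (the computation in \cite[(4.1)]{HNV20} supplies it). Modulo those standard details, that half of your argument is sound.
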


In \cite{HNV20, CKNP21}, Malliavin calculus plays a crucial role in the 
study of spatial ergodicity and CLT for stochastic heat equation with 
Lipschitz continuous coefficient. For H\"older continuous coefficients, we 
cannot apply Malliavin calculus to study spatial limit theorems of the 
solution, at least not in a direct way.  In fact, when 
$\frac12\leq \gamma\leq \frac34$, we do not know if the solution is a 
functional of the underlying space-time white noise. Even when 
$\frac34<\gamma<1$, there exists a unique strong solution (see 
\cite[Theorem 1.3]{MP11}), it is not clear to us if the solution is 
Malliavin differentiable.  By the exponential duality, the Laplace 
functional of the solution to \eqref{eq:SHE} is available. In the case 
$\frac12<\gamma<1$, the dual process is a solution to an SPDE driven by a 
one-sided stable process (see \cite{Myt98}). However, we will not apply 
this approach to prove the CLT in Theorem \ref{th:CLT}. 

The main tool to prove Theorem \ref{th:CLT} is associativity.  We recall from Esary et al. \cite{EPW67} that
a random vector $X:=(X_1\,,\ldots,X_m)$ is said to be \emph{associated} if
	\begin{equation}\label{E:assoc}
		\Cov[h_1(X)\,,h_2(X)]\ge0,
	\end{equation}
for every pair of functions $h_1,h_2:\R^m\to\R$ that are nondecreasing in 
every coordinate and satisfy $h_1(X),h_2(X)\in L^2(\Omega)$. A random 
field $\Phi=\{\Phi(x)\}_{x\in\R^d}$ is \emph{associated} if 
$(\Phi(x_1)\,,\ldots,\Phi(x_m))$ is associated for every 
$x_1,\ldots,x_m\in\R^d$. We remark that an associated random vector is 
also said to satisfy the FKG inequalities; see Newman \cite{New80}.

\begin{theorem}\label{th:association}
          Suppose $\gamma=\frac{1}{2}$ or $\gamma\in(\frac{3}{4},1)$, and 
let $u\in\mathbf{U}_\gamma$.  Then the following conclusion holds.  If 
$\gamma\in (0, \frac12)\cup(\frac{1}{2},\frac{3}{4}]$, then there exists 
$u\in\mathbf{U}_\gamma$ such that the conclusion holds.  

       \textbf{Conclusion:}  $u$ is associated. 
\end{theorem}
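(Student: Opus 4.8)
The plan is to realize the associated solution as a spatial-lattice scaling limit of finite-dimensional interacting diffusions that are themselves associated by a generator criterion, and then to invoke stability of association under convergence in law. First I would discretize \eqref{eq:SHE} on the lattice $\epsilon\Z$, replacing $\tfrac12\partial_x^2$ by the discrete Laplacian $\Delta_\epsilon$ and the white noise by a family of independent, suitably rescaled Brownian motions $\{B_i\}_{i\in\Z}$, obtaining the system of It\^o SDEs
\begin{align*}
\d U_i(t)=(\Delta_\epsilon U(t))_i\,\d t+U_i(t)^\gamma\,\d B_i(t),\qquad U_i(0)=1.
\end{align*}
Since $x\mapsto x^\gamma$ is only H\"older, existence of a nonnegative solution is obtained by Yamada--Watanabe-type arguments, and a comparison with the zero solution keeps $U_i\ge0$, so that $U_i^\gamma$ is well defined throughout.

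The two structural features I would exploit are that the drift is \emph{cooperative}, in the sense that $(\Delta_\epsilon U)_i$ is nondecreasing in every off-diagonal coordinate $U_j$ with $j\neq i$, and that the diffusion is \emph{diagonal}, the $i$-th coefficient depending only on $U_i$. From these I would deduce that for each fixed $t$ (and, by the Markov property, for the whole trajectory) the vector $(U_i(t))_i$ is associated, via Harris's correlation inequality for monotone Markov processes. Concretely, the cooperative drift together with the diagonal diffusion makes the semigroup monotone --- this follows from the comparison principle for the system run with shared Brownian motions --- while the carr\'e-du-champ operator of the generator $L$ satisfies
\begin{align*}
\Gamma(f,g)=L(fg)-fLg-gLf=\sum_i U_i^{2\gamma}\,\partial_if\,\partial_ig\ge0
\end{align*}
for all nondecreasing $f,g$, the off-diagonal terms vanishing precisely because the driving noises are independent. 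Monotonicity of the semigroup lets one keep $P_{t-s}f$ and $P_{t-s}g$ increasing along the standard identity $\Cov(f(U(t)),g(U(t)))=\int_0^t\E[\Gamma(P_{t-s}f,P_{t-s}g)(U(s))]\,\d s$, and since the initial datum is a deterministic point, this integral is nonnegative, giving association of $(U_i(t))_i$.

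Finally I would pass to the limit $\epsilon\downarrow0$. Uniform moment bounds and tightness produce a subsequential limit of $U^{(\epsilon)}$ which is identified with a nonnegative weak solution $u\in\mathbf{U}_\gamma$ of \eqref{eq:mild}; when $\gamma=\tfrac12$ or $\gamma\in(\tfrac34,1)$ uniqueness in law forces this limit to be the prescribed $u$, whereas for $\gamma\in(0,\tfrac12)\cup(\tfrac12,\tfrac34]$ it merely selects one element of $\mathbf{U}_\gamma$ --- which matches exactly the quantifier structure of the statement. Because association of a random vector is preserved under convergence in law by the properties of Esary--Proschan--Walkup \cite{EPW67}, the finite-dimensional distributions of $u$ inherit association from those of $U^{(\epsilon)}$, yielding the conclusion.

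The main obstacle I anticipate is not the generator computation, which is essentially forced by the diagonal/cooperative structure, but the approximation itself. With only a H\"older diffusion coefficient the lattice system has non-Lipschitz coefficients, so both well-posedness of the $\Delta_\epsilon$-scheme and its convergence to a genuine solution of \eqref{eq:mild} demand care, and in the non-uniqueness regime one must verify that the particular limit produced is a bona fide member of $\mathbf{U}_\gamma$ rather than some spurious object. Securing moment estimates uniform in $\epsilon$, strong enough to justify simultaneously the tightness and the $L^2(\Omega)$-integrability required for the association inequality, is where I expect the bulk of the technical effort to lie.
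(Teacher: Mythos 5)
Your overall template --- approximate, prove association of each approximation, and pass to the limit using the stability of association under convergence in law from \cite{EPW67} --- is exactly the paper's, and your closing step (uniqueness in law for $\gamma=\frac12$ or $\gamma\in(\frac34,1)$ versus mere existence of one good $u\in\mathbf{U}_\gamma$ otherwise) matches the quantifier structure of the statement correctly. The difference lies in the approximation and in the mechanism for association: the paper keeps the continuum SPDE but replaces $|x|^\gamma$ by nonnegative Lipschitz functions $\sigma_n$ vanishing at $0$, so that each approximating solution $u_n$ is associated by the Malliavin/Clark--Ocone result \cite[Theorem A.4]{CKNP23} and the convergence $u_{n_k}\to u$ is already supplied by the Mytnik--Perkins construction \cite{MP11}; you instead discretize space, keep the H\"older coefficient, and prove association of the lattice diffusion by a Harris-type monotonicity/carr\'e-du-champ criterion.

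As written, however, your route has genuine gaps. First, for $\gamma\in(0,\frac12)$ the Yamada--Watanabe condition fails for $x\mapsto x^\gamma$ (one needs $\int_{0+}h(r)^{-2}\,\d r=\infty$ for the modulus $h(r)=r^\gamma$, i.e.\ $\gamma\ge\frac12$), so pathwise uniqueness, the comparison principle for two copies of the system run with shared Brownian motions, and hence the monotonicity of the semigroup --- the ingredient your Harris argument cannot do without --- are all unavailable for the lattice system; yet the theorem must cover this range of $\gamma$. Second, even for $\gamma\ge\frac12$ the identity $\Cov(f(U(t)),g(U(t)))=\int_0^t\E\left[\Gamma(P_{t-s}f,P_{t-s}g)(U(s))\right]\d s$ requires $P_{t-s}f$ to be differentiable so as to lie in the domain of $\Gamma$, which is not automatic when the diffusion coefficient is merely H\"older; one would have to regularize the coefficient anyway to justify this computation. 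Third, the convergence of the $\epsilon\Z$ scheme with H\"older coefficients to a weak solution of \eqref{eq:mild} is itself a substantial unproved step rather than a routine tightness argument, whereas the paper leans on a convergence statement already established in \cite{MP11}. All three gaps close simultaneously if you first replace $x^\gamma$ by nonnegative Lipschitz truncations as the paper does --- at which point your generator computation becomes a legitimate alternative to citing \cite[Theorem A.4]{CKNP23} for the approximating equations --- but without that regularization the argument does not go through for the full range of $\gamma$.
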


Chen et al. \cite[Theorem A.4]{CKNP23} prove associativity for a class of 
stochastic heat equations with Lipschitz continuous diffusion 
coefficients.  Using  the comparison principle for the stochastic heat 
equation, they show that under certain assumptions on the initial data and 
diffusion coefficient, the Malliavin derivative of the solution is 
nonnegative. Then an appeal to Clark-Ocone formula shows that 
\eqref{E:assoc} holds and hence proves associativity. See the proof of 
\cite[Theorem A.4]{CKNP23} for details. As we mentioned before, it does 
not make sense to show that Malliavin derivative of the solution to 
\eqref{eq:SHE} is nonnegative. However, as in the proof of existence 
of weak solution to \eqref{eq:SHE} (see \cite[Theorem 1.1]{MP11} and 
\cite[Theorem 1.2]{MPS06}), we will approximate the H\"older coefficient 
by Lipschitz continuous functions and show that each approximating process 
is associated by  \cite[Theorem A.4]{CKNP23}. Thus, as a weak limit of 
these approximating processes,  a solution to \eqref{eq:SHE} is 
associated. The detailed proof of Theorem \ref{th:association} will be 
given in Section \ref{se:ass}. Then we will apply Theorem \ref{th:association} to
prove Theorem \ref{th:CLT} in Section \ref{sec:CLT}.

According to Theorem \ref{th:CLT}, the spatial integral of the solution to \eqref{eq:SHE} satisfies law of large numbers and CLT. 
It is natural to ask if it satisfies large deviation principle. In Section \ref{sec:LDP},  we will consider the moment generating 
function of the spatial integral of the solution to \eqref{eq:SHE} and derive a precise limit in the case $\gamma=\frac12$.
We can also apply associativity to study the spatial growth. In Section \ref{sec:max}, we consider $\gamma=\frac12$
and give a lower bound on the asymptotic maximum of the solution to \eqref{eq:SHE} (based on an assumption on the distribution of $u(t,0)$).

We conclude this section with a brief overview of the notation used throughout the paper.
For every $Y\in L^k(\Omega)$ with $k\in[1,\infty)$, we write $\|Y\|_k=(\E[|Y|^k])^{1/k}$.  Throughout we write ``$g_1(x)\lesssim g_2(x)$ for all $x\in X$'' when
there exists a real number $L$ such that $g_1(x)\le Lg_2(x)$ for all $x\in X$.
Define
\begin{align*}
\|f\|_\lambda = \sup_{x\in \R}|f(x)|\e^{-\lambda |x|}
\end{align*}
and set 
$C_{tem}(\R)= \{f\in C(\R): \|f\|_\lambda <\infty\,\, \text{for any $ \lambda>0$}\}$. 
Finally, symbols such as $C,C_T,c,c_n$, without any explicit dependence on 
$\R$ or other spaces, will denote constants which may change from 
line to line.

\section{Associativity}\label{se:ass}

We prove associativity in this section.

 \begin{proof}[Proof of Theorem \ref{th:association}]
          We will modify the arguments in the proof of \cite[Theorem 1.1]{MP11} to approximate the solution to \eqref{eq:SHE}.
          Choose an even function $\psi_n \in C_c^{\infty}(\R)$ so that 
$0\leq \psi_n\leq 1$, $\|\psi_n'\|_\infty \leq 1$, $\psi_n(x)=1$ if 
$|x|\leq n$ and $\psi_n(x)=0$ if $|x|\geq n+2$. For $n\geq 1$, define the 
function
\begin{align*}
\sigma_n(x)= \psi_n(x) \left(|x|^\gamma1_{\{|x|>1/n\}} + n^{1-\gamma}|x|1_{\{|x|\leq 1/n\}}\right), \quad x\in \R.
\end{align*}
Then for each $n\geq1$, the function $\sigma_n$ satisfies 
\begin{align}
|\sigma_n(x)|& \leq C\cdot(1+|x|), \quad\text{for all $x\in \R$} \nonumber\\
|\sigma_n(x)-\sigma_n(y)| &\leq c_n|x-y|, \quad \text{for all $x, y\in \R$} \label{eq:lip}\\
\sigma_n(x) &\geq 0, \quad\text{for all $x\in \R$} \label{eq:ass}\\
\sigma_n(0)&=0. \label{eq:non}
\end{align}
In addition,
\begin{align*}
          \left|\sigma_n(x)- |x|^\gamma\right| &\leq |1-\psi_n(x)| \left(|x|^\gamma1_{\{|x|>1/n\}} + n^{1-\gamma}|x|1_{\{|x|\leq 1/n\}}\right)\\
          &\quad + 1_{\{|x|\leq 1/n\}} \left||x|^\gamma- n^{1-\gamma}|x|\right|\\
          &\leq  |1-\psi_n(x)| (2+|x|) + n^{-\gamma}\left(\gamma^{\gamma/(1-\gamma)}- \gamma^{1/(1-\gamma)}\right).
\end{align*}
Thus, $\sigma_n$ converges to $|x|^\gamma$ uniformly on compact sets as $n\to \infty$.

Condition \eqref{eq:lip} ensures that 
for each $n\geq 1$, there exists a unique solution (denoted by $u_n$) to the following equation
 \begin{align}\label{eq:sigma_n}
\begin{cases}
\partial_tu_n(t,x)=\frac12\partial_x^2u_n(t,x)+ \sigma_n(u_n(t,x)) \xi(t,x),\\
u_n(0)\equiv1.
\end{cases}
\end{align}
Because of \eqref{eq:non}, the solution $u_n$ is nonnegative (see \cite{Mue91, Shi94}). 
Moreover, by  \eqref{eq:ass} and \cite[Theorem A.4]{CKNP23}, $u_n$ is  associated.  
Furthermore, from the proof of 
  \cite[Theorem 1.1]{MP11},  there exists a subsequence $(n_k)_{k=1}^\infty$ such that as $k\to\infty$, $u_{n_k}$ converges a.s. to $u$ in $C(\R_+, C_{tem}(\R))$ on some probability space, where $u$ is a weak solution to the following stochastic heat equation
   \begin{align*}
\begin{cases}
\partial_tu(t,x)=\frac12\partial_x^2u(t,x)+ |u(t,x)|^\gamma \xi(t,x), \\
u(0)\equiv1.
\end{cases}
\end{align*}
Clearly,  $u$ is nonnegative, which implies that $u$ is a weak solution to \eqref{eq:SHE}. 
Moreover, the almost sure convergence of $u_{n_k}$ to $u$ in\\ 
$C(\R_+, C_{tem}(\R))$ implies that the finite-dimensional 
distributions of $u_{n_k}$ converge to those of $u$ as $k\to\infty$.  
Since convergence of finite-dimensional distributions preserves 
associativity (see \cite[(P$_5$)]{EPW67}), it follows that the process $u$ 
is associated.  If $\gamma=\frac12$ or $\gamma\in (\frac34, 1)$, then \eqref{eq:SHE} has a unique solution in 
law and it follows that any $u\in \mathbf{U}_{\gamma}$ is associated. If $\gamma\in (0, \frac12)\cup(\frac12, \frac34]$, we conclude that 
there exists  an associated $u$ in $\mathbf{U}_\gamma$.
The proof of Theorem \ref{th:association} is complete. 
\end{proof}

\begin{remark}\label{rem:sta}
(1) From the proof of Theorem \ref{th:association}, we see that 
associativity holds for the solution to \eqref{eq:SHE} with some general 
nonnegative initial condition.  As a consequence of associativity, we have 
for $(t_1, x_1), \ldots, (t_m, x_m)\in \R_+\times\R$ and for Lipschitz 
continuous functions $f, g: \R^m\mapsto \R$, 
\begin{align}\label{Newman2}
         & \left|\Cov(f(u(t_1,x_1), \ldots, u(t_m,x_m))\,, g(u(t_1,x_1), \ldots, u(t_m,x_m)))\right| \nonumber\\
          &\qquad\qquad\qquad \leq \sum_{j=1}^m\sum_{\ell=1}^m
           \lip_j(f)\lip_\ell(g) \Cov(u(t_j,x_j)\,, u(t_\ell, x_\ell)),
\end{align}
where $\lip_j(f)$ denotes the Lipschitz constant with respect to the $j$-th component; see Bulinski and Shabanovich \cite{BuS98} and  \cite[Theorem 6.2.6]{PR12}.

\smallskip
(2) Fix $t_1, \ldots, t_k\in \R_+$. According to \cite[Lemma 18]{Dal99}, for each $n\geq1$, the process $\{u_n(t_1, x), \ldots, u_n(t_k,x)\}_{x\in \R}$ is stationary, where $u_n$ is the solution to \eqref{eq:sigma_n}.  As a consequence, the process 
$\{u(t_1, x), \ldots, u(t_k,x)\}_{x\in \R}$ is stationary, where $u$ is the weak limit of $u_{n_k}$ and is a weak solution to \eqref{eq:SHE}.

\end{remark}

\section{Spatial ergodicity and CLT}\label{sec:CLT}

In this section, we prove Theorem \ref{th:CLT} using Theorem \ref{th:association}. We  start by giving an estimate on the spatial covariance of the solution to \eqref{eq:SHE}.

\begin{lemma}\label{lem:cov}
          Fix $T>0$. There exists a constant $C_T>0$ such that for all $t\in [0, T]$ and $x\in \R$
          \begin{align*}
          \Cov(u(t,x)\,, u(t,0)) \leq C_T\e^{-\frac{x^2}{4T}}.
          \end{align*}
\end{lemma}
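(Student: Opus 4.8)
The plan is to compute the covariance directly from the mild formulation \eqref{eq:mild} using the Walsh stochastic integral isometry. Taking expectations in \eqref{eq:mild} annihilates the mean-zero stochastic integral, so $\E[u(t,x)]=1$ and $u(t,x)-\E[u(t,x)]$ equals the stochastic integral term. The covariance is then the expectation of the product of the two stochastic integrals, and the isometry yields
\begin{align*}
\Cov(u(t,x),u(t,0))=\int_0^t\int_\R p_{t-s}(x-y)\,p_{t-s}(-y)\,\E[u(s,y)^{2\gamma}]\,\d y\,\d s.
\end{align*}

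First I would invoke the spatial stationarity established in Remark \ref{rem:sta}(2), which guarantees that $\E[u(s,y)^{2\gamma}]=\E[u(s,0)^{2\gamma}]$ is independent of $y$. This lets me pull the moment factor out of the $y$-integral, leaving the deterministic convolution $\int_\R p_{t-s}(x-y)p_{t-s}(y)\,\d y$, which by evenness of the heat kernel and the Chapman--Kolmogorov semigroup identity equals $p_{2(t-s)}(x)$. Hence
\begin{align*}
\Cov(u(t,x),u(t,0))=\int_0^t\E[u(s,0)^{2\gamma}]\,p_{2(t-s)}(x)\,\d s.
\end{align*}
Next I would extract the Gaussian decay: since $t-s\le t\le T$ we have $\frac{x^2}{4(t-s)}\ge\frac{x^2}{4T}$, so $p_{2(t-s)}(x)\le\frac{1}{\sqrt{4\pi(t-s)}}\,\e^{-x^2/(4T)}$. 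Factoring out the $x$-dependent exponential reduces the claim to bounding $\int_0^t\E[u(s,0)^{2\gamma}](t-s)^{-1/2}\,\d s$ by a constant depending only on $T$; the time singularity $(t-s)^{-1/2}$ is integrable and contributes a factor of order $\sqrt{t}\le\sqrt{T}$.

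The main obstacle is thus a uniform bound on the moment $\E[u(s,0)^{2\gamma}]$ over $s\in[0,T]$. When $\gamma=\frac12$ this is immediate, since $2\gamma=1$ and $\E[u(s,0)]=1$. For general $\gamma\in(0,1)$, since $2\gamma<2$, Lyapunov's inequality gives $\E[u(s,0)^{2\gamma}]\le(\E[u(s,0)^2])^{\gamma}$, so it suffices to control the second moment. Because $\sigma(u)=|u|^\gamma$ obeys the linear growth bound $|u|^\gamma\le 1+|u|$, a standard Gronwall argument applied to the second moment of the mild equation yields $\sup_{s\in[0,T]}\E[u(s,0)^2]<\infty$. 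The one delicate point is that $u$ is only a weak limit of the Lipschitz approximations $u_n$ of Section \ref{se:ass}; I would therefore establish the second-moment bound uniformly in $n$ for the $u_n$ and transfer it to $u$ via Fatou's lemma together with the almost sure convergence in $C(\R_+,C_{tem}(\R))$.
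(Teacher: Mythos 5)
Your proposal is correct and follows essentially the same route as the paper: Walsh isometry plus spatial stationarity to reduce the covariance to $\int_0^t\E[u(s,0)^{2\gamma}]p_{2(t-s)}(x)\,\d s$, then the elementary Gaussian bound $p_{2(t-s)}(x)\le(4\pi(t-s))^{-1/2}\e^{-x^2/(4T)}$ and integrability of the $(t-s)^{-1/2}$ singularity. The only difference is the auxiliary moment bound $\sup_{s\le T}\E[u(s,0)^{2\gamma}]<\infty$, which the paper imports from the moment estimates of Chen and Xia \cite{CX23} while you rederive it via linear growth, Gronwall for the Lipschitz approximations, and Fatou's lemma; both are valid.
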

\begin{proof}
  By the moment estimates in \cite[Theorem 1.6]{CX23},  for $k\geq2$, there exists a constant $c_k>0$ such that
\begin{align}\label{eq:moment}
\|u(t,x)\|_k^2 \leq c_k t^{\frac{1}{2(1-\gamma)}}
\end{align}
all for $t>0$.  By stationarity and Ito's isometry, we have
\begin{align*}
          \Cov(u(t, x)\,, u(t,0))& = \int_0^t\int_\R p_{t-s}(x-y)p_{t-s}(y)\E\left[u(s,y)^{2\gamma}\right]\d y\d s\\
          &=\int_0^t \E\left[u(s,0)^{2\gamma}\right] p_{2(t-s)}(x)\d s\\
          &\le c_{T} \int_0^Tp_{2s}(x)\d s \leq c_T \sqrt{\frac{T}{\pi}}\e^{-\frac{x^2}{4T}}.
\end{align*}
The proof is complete.
\end{proof}

\begin{proof}[Proof of Theorem \ref{th:CLT}: spatial ergodicity] 
          Chen et al. \cite[Lemma 7.2]{CKNP21} provides a general criterion on the ergodicity of a stationary process; see also \cite[Lemma 4.2]{BaZ24}. Indeed, we can apply Lemma \ref{lem:cov} and the inequality \eqref{Newman2} and follow along 
          the same lines as in the proof of \cite[Theorem 1.1]{Pu25} to obtain that for each $t>0$, $\{u(t,x):x \in \R\}$ is ergodic.
\end{proof}

In order to prove the CLT in Theorem \ref{th:CLT}, we first establish the tightness. For $t>0$ and $N\geq1$, denote 
\begin{align}\label{eq:int}
S_{N,t} = \int_0^N u(t,x) \, \d x. 
\end{align}

\begin{proposition} \label{prop:3.2}
          For $T>0$ and $k\geq 2$,  there exists a constant $C_{T, k}>0$ such that for all $s, t \in [0, T]$ and $N\geq1$
          \begin{align}\label{eq:increment}
          \E \left[|S_{N,t}- S_{N,s}|^k\right]\leq C_{T,k}N^{k/2}|t-s|^{k/2}.
          \end{align}
          Moreover, for $t>0$,
          \begin{align}\label{eq:var}
          \lim_{N\to\infty}\frac{\E[(S_{N,t}-N)^2]}{N}=  \int_0^t \E\left[u(r,0)^{2\gamma}\right]\d r. 
          \end{align}
\end{proposition}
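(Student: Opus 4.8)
The plan is to handle the two assertions by different means: the limit \eqref{eq:var} is a covariance computation resting on Lemma \ref{lem:cov}, whereas the increment bound \eqref{eq:increment} is a moment estimate for a Walsh stochastic integral. I would prove \eqref{eq:var} first. Taking expectations in \eqref{eq:mild} gives $\E[u(t,x)]=1$, hence $\E[S_{N,t}]=N$ and $\E[(S_{N,t}-N)^2]=\Var(S_{N,t})$. By Fubini and the spatial stationarity from Remark \ref{rem:sta}(2),
\begin{align*}
\Var(S_{N,t})=\int_0^N\!\!\int_0^N \Cov(u(t,x),u(t,y))\,\d x\,\d y=\int_{-N}^N (N-|z|)\,\Cov(u(t,z),u(t,0))\,\d z,
\end{align*}
the second equality being the standard reduction of a double integral of a function of $x-y$ over $[0,N]^2$. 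Dividing by $N$ yields $\int_{-N}^N(1-|z|/N)\,\Cov(u(t,z),u(t,0))\,\d z$. Since Lemma \ref{lem:cov} supplies the integrable dominating bound $\Cov(u(t,z),u(t,0))\le C_T\e^{-z^2/(4T)}$, dominated convergence gives the limit $\int_\R \Cov(u(t,z),u(t,0))\,\d z$. Inserting the explicit formula $\Cov(u(t,z),u(t,0))=\int_0^t\E[u(r,0)^{2\gamma}]\,p_{2(t-r)}(z)\,\d r$ from the proof of Lemma \ref{lem:cov} and integrating in $z$ via $\int_\R p_{2(t-r)}(z)\,\d z=1$ produces exactly $\int_0^t\E[u(r,0)^{2\gamma}]\,\d r$.

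For \eqref{eq:increment} I fix $s<t$, subtract the mild equations \eqref{eq:mild} at times $t$ and $s$, integrate in $x$ over $[0,N]$, and apply the stochastic Fubini theorem to write
\begin{align*}
S_{N,t}-S_{N,s}=\int_{[0,t]\times\R} G_r(y)\,u(r,y)^\gamma\,\xi(\d r\,\d y),
\end{align*}
where $G_r(y)=\int_0^N[p_{t-r}(x-y)-p_{s-r}(x-y)]\,\d x$ for $r\le s$ and $G_r(y)=\int_0^N p_{t-r}(x-y)\,\d x$ for $s<r\le t$. The Burkholder--Davis--Gundy inequality followed by Minkowski's inequality in $L^{k/2}$ (legitimate since $k\ge2$) then gives
\begin{align*}
\E\big[|S_{N,t}-S_{N,s}|^k\big]\le C_k\Big(\int_0^t\!\!\int_\R |G_r(y)|^2\,\|u(r,y)^\gamma\|_k^2\,\d y\,\d r\Big)^{k/2}.
\end{align*}
The moment bound \eqref{eq:moment}, uniform on $[0,T]$ and (by stationarity) in $y$, together with Jensen's inequality when $k\gamma<2$, bounds $\|u(r,y)^\gamma\|_k^2$ by a constant $C_{T,k}$, so everything reduces to showing $\int_0^t\int_\R|G_r(y)|^2\,\d y\,\d r\lesssim N|t-s|$.

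I would estimate the two time-ranges separately. On $[s,t]$ one has $0\le\int_0^N p_{t-r}(x-y)\,\d x\le1$ with spatial integral $N$, so $\int_\R|G_r|^2\,\d y\le N$ and this range contributes at most $N(t-s)$. The range $[0,s]$ is the crux: with $F(a,\cdot)=p_a\ast 1_{[0,N]}$ and $\tau=t-s$, a change of variables turns it into $\int_0^s\int_\R|F(a+\tau,y)-F(a,y)|^2\,\d y\,\d a$, which by Plancherel equals $\frac{1}{2\pi}\int_\R\frac{1-\e^{-s\lambda^2}}{\lambda^2}(\e^{-\tau\lambda^2/2}-1)^2|\widehat{1_{[0,N]}}(\lambda)|^2\,\d\lambda$. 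Using $\frac{1-\e^{-s\lambda^2}}{\lambda^2}\le\frac1{\lambda^2}$ and the linearizing inequality $(\e^{-\tau\lambda^2/2}-1)^2\le\min(1,\tau^2\lambda^4/4)\le\tau\lambda^2/2$, the first two factors are at most $\tau/2$ uniformly in $\lambda$, and since $\int_\R|\widehat{1_{[0,N]}}(\lambda)|^2\,\d\lambda=2\pi N$ this range contributes at most $\tfrac12 N\tau$. Adding the two contributions yields $\int_0^t\int_\R|G_r|^2\lesssim N|t-s|$, hence $\E[|S_{N,t}-S_{N,s}|^k]\le C_{T,k}N^{k/2}|t-s|^{k/2}$.

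I expect the main obstacle to be precisely the $[0,s]$ contribution, where the difference of heat kernels at the two times must be shown to produce one factor of $|t-s|$ and one factor of $N$ simultaneously; the elementary bound $(1-\e^{-x})^2\le x$ is what linearizes the increment and makes it match the mass $\int_\R|\widehat{1_{[0,N]}}|^2=2\pi N$. The remaining care lies in justifying the stochastic Fubini step for these (possibly non-unique) weak solutions and in verifying the uniform moment bound for $u(r,y)^\gamma$ across the whole range $k\ge2$, $\gamma\in(0,1)$.
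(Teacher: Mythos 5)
Your proposal is correct and follows essentially the same route as the paper: write $S_{N,t}-S_{N,s}$ as a Walsh integral, apply Burkholder and Minkowski together with the moment bound \eqref{eq:moment}, and reduce to a deterministic kernel estimate; and compute the variance limit from the stationary covariance. The only difference is that you prove the two steps the paper outsources to \cite{HNV20} (the bound $\int_0^t\int_\R|G_r|^2\,\d y\,\d r\lesssim N|t-s|$, via Plancherel, and the limit \eqref{eq:var}, via dominated convergence against the Gaussian bound of Lemma \ref{lem:cov}), and both of your computations check out.
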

\begin{proof}
          The proof is similar to that of \cite[Proposition 4.1]{HNV20}. We write
          \begin{align*}
          S_{N,t}-S_{N,s}&= \int_0^N [u(t,x)-u(s,x)]\d x\\
          &=\int_0^T\int_\R \int_0^N\left[1_{\{r<t\}}p_{t-r}(x-z)-1_{\{r<s\}}p_{s-r}(x-z)\right]\d x \\
          &\hspace{6cm} \times u(r,z)^\gamma \xi(\d r\, \d z).
          \end{align*}
          By Burkholder's inequality and Minkowski's inequatlity, 
          \begin{align*}
          &\|S_{N,t}-S_{N,s}\|_k^2 \\
          & \quad \lesssim \int_0^T\int_\R \left[\int_0^N\left[1_{\{r<t\}}p_{t-r}(x-z)-1_{\{r<s\}}p_{s-r}(x-z)\right]\d x\right]^2 \\
          &\hspace{8cm}
          \times\|u(r,0)^{\gamma}\|_k^2  \d z\d r\\
          & \quad \lesssim \int_0^T\int_\R \left[\int_0^N\left[1_{\{r<t\}}p_{t-r}(x-z)-1_{\{r<s\}}p_{s-r}(x-z)\right]\d x\right]^2\d z \d r,
          \end{align*}
          where the second inequality holds by \eqref{eq:moment}. Then, we apply \cite[(4.1)]{HNV20} to derive 
          \eqref{eq:increment}.
          
          Finally, we can perform the same calculation as in the proof of 
\cite[Proposition 3.1]{HNV20} to obtain \eqref{eq:var}.  
This finishes the proof of Proposition \ref{prop:3.2}
\end{proof}

We are now ready to prove the CLT in Theorem \ref{eq:SHE}.

\begin{proof}[Proof of Theorem \ref{eq:SHE}: CLT]
           Recall the spatial integral $S_{N,t}$ defined in \eqref{eq:int}.
           In light of \eqref{eq:increment}, we need show that the finite-dimensional distributions of 
           $\{\frac1{\sqrt{N}}(S_{N,t}-N)\}_{t\in [0, T]}$ converge to those of a centered Gaussian process, denoted by $\{\mathcal{G}_t\}_{t\in [0, T]}$
           with covariance 
           \begin{align*}
             \Cov(\mathcal{G}_{t_1}\,, \mathcal{G}_{t_2}) = \int_0^{t_1\wedge t_2} \E\left[u(s,0)^{2\gamma}\right] \d s.
           \end{align*}
           Fix $t_1, \ldots, t_k\in [0, T]$.  By Cram\'er-Wold theorem, we need prove that for all $a_1, \ldots, a_k\in \R$, 
           as $N\to\infty$,
           \begin{align}\label{eq:fdd}
           a_1 \frac{S_{N,t_1}-N}{\sqrt{N}}+ \ldots + a_k \frac{{S_{N,t_k}-N}}{\sqrt{N}}\xrightarrow{\rm d} a_1\mathcal{G}_{t_1}+
           \ldots + a_k\mathcal{G}_{t_k},
           \end{align}
           where the symbol $\xrightarrow{\rm d}$ refers to convergence in distribution. 
           Without loss of generality, we assume $N$ is an integer.
           
           We first show that \eqref{eq:fdd} holds for all positive $a_1, \ldots, a_k$. Fix $a_1$, $\ldots, a_k\in \R_+$.
            We write
           \begin{align*}
           \sum_{\ell=1}^k a_\ell (S_{N, t_\ell}-N)= \sum_{j=1}^NX_j,
           \end{align*}
           where
           \begin{align*}
           X_j= \int_{j-1}^j \sum_{\ell=1}^k (a_\ell u(t_\ell,x)-1) \d x, \quad j\in \Z.
           \end{align*}
           According to Remark \ref{rem:sta} (2), we see that 
$\{X_j\}_{j\in \Z}$ is a stationary sequence. 
           Moreover, we can approximate the Riemann integral that defines $X_j$ by Riemann sum and use 
           Theorem \ref{th:association} and the fact that 
$a_1, \ldots, a_k$ are nonnegative to conclude that
           the sequence $\{X_j\}_{j\in \Z}$ is associated. Furthermore, we have
           \begin{align*}
           \sigma^2&:=\sum_{j\in \Z} \Cov(X_0\,, X_j)\\
           &= \sum_{j\in \Z} \int_{-1}^0\int_{j-1}^j  
           \Cov(\sum_{\ell=1}^ka_\ell u(t_\ell, x)\,, \sum_{m=1}^ka_m u(t_m, y))\d y\d x\\
           &=\sum_{\ell=1}^k \sum_{m=1}^ka_\ell a_m \int_{-1}^0\int_\R
           \Cov( u(t_\ell, x)\,,  u(t_m, y))\d y\d x\\
           &=\sum_{\ell=1}^k \sum_{m=1}^ka_\ell a_m\int_\R
           \Cov( u(t_\ell, x)\,, u(t_m, 0))\d x\\
           &=\sum_{\ell=1}^k \sum_{m=1}^ka_\ell a_m \Cov(\mathcal{G}_{t_\ell}\,, \mathcal{G}_{t_m})<\infty. 
           \end{align*}
           Therefore, by \cite[Theorem 2]{New80}, we have as $N\to\infty$, 
           \begin{align*}
           \frac{X_1+\ldots +X_k}{\sqrt{N}} \xrightarrow{\rm d} {\rm N}(0, \sigma^2), 
           \end{align*}
           which implies that \eqref{eq:fdd} holds for all $a_1, \ldots, a_k\in \R_+$.

           We proceed to prove that \eqref{eq:fdd} holds for all $a_1, \ldots, a_k\in \R$. 
           %First, note that the above argument together
           %with \cite[Theorem 2]{New80} yields that for each $t>0$, as $N\to\infty$
           %\begin{align}\label{eq:one-d}
           %\frac1{\sqrt{N}}S_{N,t} \xrightarrow{\rm d} {\rm N}(0, \sigma_t^2),
           %\end{align}
           %where
           %\begin{align*}
           %\sigma_t^2= \int_0^t \E\left[u(r,0)^{2\gamma}\right]\d r. 
           %\end{align*}
           Fix $a_1, \ldots, a_k\in \R$. By \eqref{eq:var}, the sequence of random vectors 
           $(\frac{S_{N,t_1}-N}{\sqrt{N}}$, $\ldots, \frac{{S_{N,t_k}-N}}{\sqrt{N}})_{N\in\N}$ is tight. 
           Thus, for any sequence $\{N_j\}_{j\in\N}$ such that $N_j\to \infty$ as $j\to\infty$, there exists a subsequence $\{N_j'\}$ such that 
           as $j\to\infty$,
           \begin{align*}
           \left(\frac{S_{N'_{j},t_1}-N_j'}{\sqrt{N'_{j}}}, \ldots, \frac{S_{N'_{j},t_k}-N_j'}{\sqrt{N'_{j}}}\right) \xrightarrow{\rm d} (G_1, \ldots, G_k)
           \end{align*}
           for some random vector $ (G_1, \ldots, G_k)$. 
           %The convergence of one-point distribution in \eqref{eq:one-d}
           %guarantees that $G_\ell$ is a centered Gaussian random variable with variance 
           %$\sigma_{t_\ell}^2$ for $\ell=1, \ldots, k$.
           Since \eqref{eq:fdd} holds for all $a_1$, $\ldots, a_k\in \R_+$, we see that for all $a_1, \ldots, a_k\in \R_+$, the linear 
           combination $a_1G_1+\ldots + a_kG_k$ is a centered Gaussian random variable. Hence, by Lemma \ref{Gauss-linear}
           below, it follows that $(G_1, \ldots, G_k)$ is a centered Gaussian vector. Clearly, $(G_1, \ldots, G_k)$ has the same
           covariance matrix as $(\mathcal{G}_{t_1}, \ldots, \mathcal{G}_{t_k})$, which is independent of the choice of the sequence
           $\{N_k\}$. Therefore, we have as $N\to\infty$,
           \begin{align*}
            \left(\frac{S_{N,t_1}-N}{\sqrt{N}}, \ldots, \frac{S_{N,t_k}-N}{\sqrt{N}}\right) \xrightarrow{\rm d}
             (\mathcal{G}_{t_1}, \ldots, \mathcal{G}_{t_k}).
           \end{align*}
           
           The proof of Theorem \ref{th:CLT} is complete.
\end{proof}

\section{Large deviations}\label{sec:LDP}

Recall $S_{N,t}$ in \eqref{eq:int}. Fix $t>0$. If $N$ is an integer, we can write
\begin{align*}
S_{N,t} = \sum_{j=1}^{N}X_{j,t}, \quad \text{with}\quad X_{j,t} = \int_{j-1}^j u(t,x)\, \d x. 
\end{align*}
By Theorem \ref{th:CLT}, $S_{N,t}$ satisfies law of large numbers and CLT. A further question is that if $S_{N,t}$ satisfies large deviation principle. We refer to \cite{Ebi25b} for large deviations for  spatial average of stochastic heat and wave equations with bounded and Lipschitz continuous diffusion coefficients.  In principle, large deviation for a sum of random variables $S_{N,t}$ is related to the limit
\begin{align}\label{eq:LDP}
\lim_{N\to\infty} \frac1N\log \E\left[\e^{\theta S_{N,t}}\right], \quad \theta \in \R.
\end{align}
Since the sequence $\{X_{j,t}\}_{j\in \Z}$ is stationary and associated,  the limit in \eqref{eq:LDP} exists for all $\theta\in \R$ (see the proof of \cite[Theorem 3.16]{Oli12}). When $\gamma=\frac12$, we are able to compute the precise limit in \eqref{eq:LDP}
for negative $\theta$.

\begin{theorem}\label{th:LDP}
          Assume $\gamma=\frac12$. Fix $t>0$. For $\lambda>0$, it holds that
          \begin{align}\label{eq:LDP2}
           \lim_{N\to\infty} \frac1N\log \E\left[\e^{-\lambda S_{N,t}}\right]= -\frac{2\lambda}{\lambda t+2}.
          \end{align}
\end{theorem}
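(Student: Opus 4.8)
The plan is to convert the moment generating function into a deterministic PDE via the exponential duality available at $\gamma=\tfrac12$, and then extract the exponential rate by a comparison argument. Since $u(t,\cdot)$ is the density of super Brownian motion started from Lebesgue measure, the log-Laplace relation gives, for the bounded measurable test function $\phi_N:=\lambda\mathbf{1}_{[0,N]}$,
\begin{align*}
\E\left[\e^{-\lambda S_{N,t}}\right]=\E\left[\exp\left(-\int_\R u(t,x)\phi_N(x)\,\d x\right)\right]=\exp\left(-\int_\R v_N(t,x)\,\d x\right),
\end{align*}
where $v_N$ is the nonnegative bounded solution of $\partial_t v=\tfrac12\partial_x^2 v-\tfrac12 v^2$ with $v_N(0,\cdot)=\phi_N$. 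Thus \eqref{eq:LDP2} reduces to showing
\begin{align*}
\lim_{N\to\infty}\frac1N\int_\R v_N(t,x)\,\d x=w(t):=\frac{2\lambda}{\lambda t+2},
\end{align*}
where $w(t)$ is precisely the spatially constant solution of the PDE, i.e. the solution of the ODE $\dot w=-\tfrac12 w^2$ with $w(0)=\lambda$. The whole point is that in the bulk of $[0,N]$ the solution $v_N$ should be indistinguishable from $w$, the two boundaries at $0$ and $N$ contributing only an $O(1)$ correction to the integral.

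For the upper bound I would invoke the comparison principle for this semilinear heat equation twice. Since $\phi_N\le\lambda=w(0)$ and $w(t)$ solves the same PDE, comparison yields $v_N(t,x)\le w(t)$ for every $x$; and since the nonlinearity $-\tfrac12 v^2$ is dissipative, $v_N$ is also dominated by the linear heat flow $\bar v_N(t,x)=\lambda\int_0^N p_t(x-y)\,\d y$. Splitting $\int_\R v_N=\int_{[0,N]}v_N+\int_{[0,N]^c}v_N$ and bounding the first integral by $N w(t)$ and the second by $\int_{[0,N]^c}\bar v_N$, which is $O(1)$ uniformly in $N$ by the Gaussian tails of $p_t$, gives $\int_\R v_N\le N w(t)+O(1)$.

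The lower bound is the crux. Introduce the deficit $D_N:=w(t)-v_N\ge0$. A direct computation using both evolution equations gives
\begin{align*}
\partial_t D_N=\tfrac12\partial_x^2 D_N-\tfrac12 D_N\,(w+v_N)\le\tfrac12\partial_x^2 D_N,
\end{align*}
since $D_N\ge0$ and $w+v_N\ge0$; hence $D_N$ is a subsolution of the linear heat equation with initial datum $D_N(0,\cdot)=\lambda\mathbf{1}_{[0,N]^c}$. Comparison then produces the explicit boundary-layer bound
\begin{align*}
w(t)-v_N(t,x)\le\lambda\int_{-\infty}^{0}p_t(x-y)\,\d y+\lambda\int_N^\infty p_t(x-y)\,\d y.
\end{align*}
Integrating this over $x\in[0,N]$, each of the two terms contributes at most $\lambda\int_0^\infty\left(\int_u^\infty p_t(z)\,\d z\right)\d u=O(1)$ after the changes of variables $u=x$ and $u=N-x$ respectively, so that $\int_\R v_N\ge\int_{[0,N]}v_N\ge N w(t)-O(1)$. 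Combining the two bounds yields $\frac1N\int_\R v_N(t,x)\,\d x\to w(t)$, which is exactly \eqref{eq:LDP2}.

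The two technical points I expect to have to justify with care are the comparison principle (equivalently, monotone dependence on the initial data) for nonnegative bounded mild solutions of $\partial_t v=\tfrac12\partial_x^2 v-\tfrac12 v^2$, which is classical in the super Brownian motion literature, and the validity of the log-Laplace identity for the discontinuous test function $\lambda\mathbf{1}_{[0,N]}$, which follows by approximating $\mathbf{1}_{[0,N]}$ by continuous functions and passing to the limit in the Laplace functional. Everything else is the elementary two-sided estimate $\int_\R v_N=N w(t)+O(1)$ together with the explicit ODE solution $w(t)=\tfrac{2\lambda}{\lambda t+2}$.
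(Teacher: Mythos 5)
Your proposal is correct and follows essentially the same route as the paper: exponential duality reduces the claim to $\frac1N\int_\R v_N(t,x)\,\d x\to w(t)$, the constant solution $w(t)=\frac{2\lambda}{\lambda t+2}$ is compared with $v_N$, and the deficit $w-v_N$ is dominated by the linear heat flow started from $\lambda\mathbf{1}_{[0,N]^c}$, with the mild-equation bound $v_N\le\lambda\int_0^N p_t(\cdot-y)\,\d y$ controlling the mass outside $[0,N]$. The only (harmless) difference is that your Fubini computation $\int_0^\infty\int_u^\infty p_t(z)\,\d z\,\d u=\int_0^\infty z\,p_t(z)\,\d z<\infty$ gives the sharper $O(1)$ boundary correction, whereas the paper settles for $o(N)$ via L'H\^opital and a splitting of $[N,\infty)$ into $[N,2N]$ and $[2N,\infty)$.
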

\begin{proof}
           When $\gamma=\frac12$, the solution to \eqref{eq:SHE} corresponds to the density of super Brownian motion. By 
           the exponential duality, we write for $\lambda>0$
\begin{align*}
\E\left[\e^{-\lambda S_{N,t}}\right] =\E\left[\e^{-\lambda \langle u(t, \cdot)\,, 
\bm{1}_{[0, N]}\rangle}\right] = \e^{-\langle 1, v(t, \cdot)\rangle},
\end{align*}
where $v$ solves the heat equation
\begin{align}\label{eq:HE1}
\begin{cases}
\partial_tv= \frac12 \partial_x^2 v -\frac12 v^2,\\
v(0, x)= \lambda \bm{1}_{[0, N]}(x).
\end{cases}
\end{align}
Thus, it remains to verify 
\begin{align}\label{eq:LDPlim}
\lim_{N\to\infty} \frac{\int_\R v(t,x)\, \d x}{N} = \frac{2\lambda}{\lambda t+2}.
\end{align}

The solution to \eqref{eq:HE1} is nonnegative and satisfies the following integral equation
\begin{align*}
v(t,x)= \int_\R \lambda \bm{1}_{[0, N]}(y) p_t(x-y)\d y -\frac12 \int_0^t\int_\R p_{t-s}(x-y)v(s, y)^2\d y\d s
\end{align*}
and hence we have 
\begin{align}\label{eq:vbound}
v(t,x ) \leq  \int_\R \lambda \bm{1}_{[0, N]}(y) p_t(x-y)\d y, \quad t>0, x\in\R. 
\end{align}

Let $w$ be the solution to 
\begin{align}\label{eq:HE2}
\begin{cases}
\partial_tw= \frac12 \partial_x^2 w -\frac12 w^2,\\
w(0, x)= \lambda.
\end{cases}
\end{align}
Note that $w$ depends only on $t$, so will write $w(t)$. Thus, $w$ solves an ODE with an explicit formula given by
\begin{align*}
w(t)= \frac{2\lambda}{\lambda t+2}, \quad t\geq0.
\end{align*}
Let 
\begin{align*}
h(t,x)= w(t,x)-v(t,x).
\end{align*}
Because the initial value of $w$ is larger than that of $v$, by the comparison principle, $h$ is nonnegative (and bounded above by $\lambda$). In addition,  we see that $h$ satisfies
\begin{align*}
\partial_t h &=\partial_t w-\partial_t v
=\frac12\partial_x^2 w-\frac12w^2-\frac12\partial_x^2v+\frac12v^2\\
&=\frac12\partial_x^2h-\frac12(w+v)h \leq \frac12\partial_x^2h.
\end{align*}
%By the maximum principle for the Cauchy problem (see \cite[p.57, Theorem 6]{Eva10}), we obtain
A supersolution for $h$ is $\bar{h}$, where
\begin{align}\label{eq:h}
h\leq \bar{h},
\end{align}
and $\bar{h}$ solves
\begin{align}\label{eq:HE3}
\begin{cases}
\partial_t \bar{h}=\frac12\partial_x^2\bar{h},\\
\bar{h}(0, x)= \lambda \bm{1}_{[0, N]^c}(x).
\end{cases}
\end{align}
The solution to \eqref{eq:HE3} is given by
\begin{align*}
\bar{h}(t,x)= \int_\R \lambda \bm{1}_{[0, N]^c}(y)p_t(x-y)\d y.
\end{align*}

In order to prove \eqref{eq:LDPlim},
we use \eqref{eq:vbound} and \eqref{eq:h} to see that
\begin{align*}
&\left|\frac1N\int_\R v(t,x)\d x -w(t)\right| 
\leq \frac1N\left[\int_{[0, N]^c} v(t,x)\d x  + \int_0^N \bar{h}(t,x)\d x\right]\\
&\qquad\leq  \frac\lambda N\Big[\int_{[0, N]^c}\left(  \int_\R \bm{1}_{[0, N]}(y) p_t(x-y)\d y\right)\d x  \\
&\qquad\qquad\qquad\qquad \qquad\qquad
+ \int_0^N \left( \int_\R \bm{1}_{[0, N]^c}(y)p_t(x-y)\d y\right)\d x\Big]\\
&\qquad =\frac{2\lambda} N\int_{[0, N]^c}\left(  \int_\R \bm{1}_{[0, N]}(y) p_t(x-y)\d y\right)\d x.
\end{align*}
We only need to show that
\begin{align}\label{eq:lim2}
\lim_{N\to\infty} \frac1N\int_{-\infty}^0 \left(\int_0^N p_t(x-y)\d y\right) \d x =0
\end{align}
and
\begin{align}\label{eq:lim}
  \lim_{N\to\infty} \frac1N\int^{\infty}_N \left(\int_0^N p_t(x-y)\d y\right) \d x =0
\end{align}
For the integral in \eqref{eq:lim2}, we write
\begin{align*}
 \frac1N\int_{-\infty}^0 \left(\int_0^N p_t(x-y)\d y\right) \d x= \frac1N \int_0^N \left(\int_{-\infty}^{-y}p_t(z)\d z\right)\d y.
\end{align*}
By L'H\^opital's rule, 
\begin{align*}
\lim_{N\to\infty} \frac1N \int_0^N \left(\int_{-\infty}^{-y}p_t(z)\d z\right)\d y = \lim_{N\to\infty} \int_{-\infty}^{-N}p_t(z)\d z =0.
\end{align*}
For the  integral in \eqref{eq:lim}, we write
\begin{align*}
&\frac1N\int^{\infty}_N \left(\int_0^N p_t(x-y)\d y\right) \d x\\
& \quad= \frac1N\int^{\infty}_{2N} \left(\int_0^N p_t(x-y)\d y\right) \d x
+ \frac1N\int^{2N}_N \left(\int_0^N p_t(x-y)\d y\right) \d x.
\end{align*}
It is clear that
\begin{equation*} %\label{eq:clear}
\begin{split}
 \frac1N\int^{\infty}_{2N} \left(\int_0^N p_t(x-y)\d y\right) \d x &\leq \frac1N \int^{\infty}_{2N} \left(\int_0^N p_t(x/2)\d y\right) \d x\\
 &=\int_{2N}^{\infty}p_t(x/2)\d x \to 0, \quad \text{as $N\to\infty$.}
\end{split}
\end{equation*}
To justify the first line, we briefly explain why $|x-y|\ge x/2$ for $x,y$ in 
the domain of integration.  
Indeed, for such $x,y$ we have $x\ge2N$ and $y\le N$.  Therefore $y\le x/2$ 
and so $|x-y|=x-y\ge x-(x/2)=x/2$.  
Moreover, by a change of variable, 
\begin{align*}
\frac1N\int^{2N}_N \left(\int_0^N p_t(x-y)\d y\right) \d x&=\int_1^2\d x \int_0^1 \d y \frac{N}{\sqrt{2\pi t}} \e^{-\frac{N^2(x-y)^2}{2t}}\\
&=\int_1^2\d x \int_0^1 \d y \, \frac1{x-y}\frac{N(x-y)}{\sqrt{2\pi t}} \e^{-\frac{N^2(x-y)^2}{2t}}.
\end{align*}
Since 
\begin{align*}
\sup_{N\geq1, x, y\in \R} \left[N|x-y|\e^{-\frac{N^2|x-y|^2}{2t}}\right]<\infty \quad \text{and}\quad \int_1^2\d x \int_0^1 \d y \, \frac1{x-y}<\infty,
\end{align*}
by the dominated convergence theorem, 
\begin{align*}
\lim_{N\to\infty}\frac1N\int^{2N}_N \left(\int_0^N p_t(x-y)\d y\right) \d x=0.
\end{align*}
Therefore,  \eqref{eq:lim} holds and it proves \eqref{eq:LDPlim}. 

The proof of Theorem \ref{th:LDP} is complete.
\end{proof}

Based on Theorem \ref{th:LDP}, we can derive a upper bound on the lower tail probability of $\frac{S_{N,t}}{N}$.
\begin{corollary}
          Assume $\gamma=\frac12$. Fix $t>0$. Then for all $a\in (0, 1)$
          \begin{align}\label{eq:lower}
          \limsup_{N\to\infty} \frac1 N \log \P\left\{\frac{S_{N,t}}{N}<a\right\}\leq -\frac{2(1-\sqrt{a})^2}{t}.
          \end{align}
\end{corollary}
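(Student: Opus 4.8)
The plan is to derive \eqref{eq:lower} from the exact limit in Theorem \ref{th:LDP} by means of an exponential Chebyshev (Chernoff) bound followed by a one-parameter optimization. First I would fix $\lambda>0$ and observe that, since $\{S_{N,t}/N<a\}=\{\mathrm{e}^{-\lambda S_{N,t}}>\mathrm{e}^{-\lambda a N}\}$, Markov's inequality gives
\[
\P\left\{\frac{S_{N,t}}{N}<a\right\}\le \mathrm{e}^{\lambda a N}\,\E\left[\mathrm{e}^{-\lambda S_{N,t}}\right].
\]
Taking logarithms, dividing by $N$, and letting $N\to\infty$, the term $\lambda a$ is independent of $N$ while the remaining factor is controlled by Theorem \ref{th:LDP}, so for every $\lambda>0$
\[
\limsup_{N\to\infty}\frac1N\log \P\left\{\frac{S_{N,t}}{N}<a\right\}\le \lambda a-\frac{2\lambda}{\lambda t+2}.
\]

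Second, because the left-hand side does not depend on $\lambda$, I would minimize the right-hand side over $\lambda>0$. Setting $g(\lambda)=\lambda a-2\lambda/(\lambda t+2)$, a direct computation yields $g'(\lambda)=a-4/(\lambda t+2)^2$ and $g''(\lambda)=8t/(\lambda t+2)^3>0$, so $g$ is convex and its unique minimizer on $(0,\infty)$ solves $(\lambda t+2)^2=4/a$, that is $\lambda t+2=2/\sqrt a$ and hence $\lambda^*=2(1-\sqrt a)/(t\sqrt a)$, which is strictly positive precisely because $a\in(0,1)$. At this value the identity $\lambda t+2=2/\sqrt a$ reduces $2\lambda/(\lambda t+2)$ to $\lambda\sqrt a$, whence
\[
g(\lambda^*)=\lambda^*\bigl(a-\sqrt a\,\bigr)=-\frac{2(1-\sqrt a)^2}{t},
\]
which is exactly the bound asserted in \eqref{eq:lower}.

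Third, I do not expect any genuine obstacle: the whole argument is a single-parameter Chernoff estimate whose only substantive input is the precise value of the limit furnished by Theorem \ref{th:LDP}. The only points worth a line of care are that passing to the $\limsup$ is legitimate termwise---since $\lambda a$ is constant in $N$ and $\frac1N\log\E[\mathrm{e}^{-\lambda S_{N,t}}]$ converges by the theorem---and that the convexity of $g$ guarantees the critical point is the global minimum rather than a saddle or a maximum. Everything else is elementary calculus.
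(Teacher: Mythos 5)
Your proof is correct and follows exactly the same route as the paper: Markov's inequality applied to $\e^{-\lambda S_{N,t}}$, the limit from Theorem \ref{th:LDP}, and minimization of $\lambda a - 2\lambda/(\lambda t+2)$ over $\lambda>0$. The only difference is that you carry out the optimization explicitly (the paper leaves it to the reader), and your computation of the minimizer and minimum value is accurate.
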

\begin{proof}
          For $\lambda >0$, by Markov's inequality,
          \begin{align*}
           \log \P\left\{\frac{S_{N,t}}{N}<a\right\} &=\log \P\left\{\e^{-\lambda S_{N,t}} >\e^{-\lambda Na}\right\}\\
           &\leq \lambda N a +\log  \E\left[\e^{-\lambda S_{N,t}}\right].
          \end{align*}
          Hence, by Theorem \ref{th:LDP}, for all $\lambda>0$ and $a\in (0, 1)$,
          \begin{align*}
          \limsup_{N\to\infty} \frac1 N \log \P\left\{\frac{S_{N,t}}{N}<a\right\} \leq \lambda a -\frac{2\lambda}{\lambda t+2}.
          \end{align*}
          We minimize the right-hand side of the above line over $\lambda >0$ to obtain \eqref{eq:lower}. 
\end{proof}

\section{Discussion on spatial growth}\label{sec:max}

Throughout this section, we assume $\gamma =\frac12$ and fix $t>0$.
Note that Hu et al. \cite[Proposition 1.4]{HWXZ24} have obtained the asymptotic of the upper tail of the solution to \eqref{eq:SHE}, namely, there exist positive constants $c_1, c_2$ (independent of $t$) and $z_0>0$ such that
\begin{align}\label{eq:tail}
\e^{-c_1t^{-1/2}z} \leq  \P\{u(t, 0)>z\} \leq \e^{-c_2t^{-1/2}z}  \quad \text{for all $z\geq z_0$}.
\end{align}
The above estimate on tail probability together with Kolmogorov continuity theorem implies that there exists a deterministic constant $c>0$ such that a.s.
\begin{align*}
\limsup_{N\to\infty} \frac{\max_{0\leq x\leq N}u(t,x)}{\log N} \leq c\, t^{1/2},
\end{align*}
(see \cite[Theorem 1.11]{CX23}).

We would like to get a lower bound on the asymptotic maximum of the solution. We refer to Conus et al. \cite{CJK13}
for the asymptotic growth of the solution to stochastic heat equation with Lipschitz continuous coefficient, where a localization method was introduced to study the lower bound of the  asymptotic maximum of the solution. The localization in  \cite{CJK13} is based on Picard iteration of the solution and does not apply to our case. We will make use of associativity to approach the lower bound. We impose the following condition on the distribution of $u(t,0)$. 

\noindent Assumption 1.
There exist $\alpha_0, K_0, C_0>0$ such that for all $z\geq K_0$ and $\delta>0$
\begin{align}\label{eq:condition}
\P\left\{u(t,0) \in [z, z+\delta]\right\} \leq C_0 \delta^{\alpha_0}.
\end{align}

Under the above assumption, we have the following estimate on the joint probability of the solution to \eqref{eq:SHE}.
\begin{lemma} 
            Let $K_0$ be the constant in  Assumption 1.
           There exists a constant $C>0$ such that for all $x,y \in \R$
           \begin{align}\label{eq:joint}
           &\sup_{a, b\geq K_0}\left[ \P\left\{u(t,x) \leq a; u(t,y)\leq b\right\}-  
           \P\left\{u(t,x) \leq a\right\} \P\left\{ u(t,y)\leq b\right\}\right] \nonumber \\
           &
           \qquad\qquad \qquad \qquad \qquad \qquad
           \leq C\cdot\Cov(u(t,x)\,, u(t,y))^{\alpha_0/(2+\alpha_0)}.
           \end{align}
\end{lemma}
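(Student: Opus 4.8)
The plan is to recognize the bracketed quantity as a covariance of indicator functions and then to bound it via the association inequality \eqref{Newman2} after replacing the indicators by Lipschitz approximations. Writing $X=u(t,x)$ and $Y=u(t,y)$, the identity $\mathbf{1}_{\{X\le a\}}=1-\mathbf{1}_{\{X>a\}}$ gives
\begin{align*}
\P\{X\le a,\,Y\le b\}-\P\{X\le a\}\P\{Y\le b\}=\Cov\!\left(\mathbf{1}_{\{X>a\}},\,\mathbf{1}_{\{Y>b\}}\right),
\end{align*}
which is nonnegative by associativity (Theorem \ref{th:association}). Thus it suffices to bound $\Cov(\mathbf{1}_{\{X>a\}},\mathbf{1}_{\{Y>b\}})$ uniformly in $a,b\ge K_0$.

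For $\epsilon>0$ I would introduce the nondecreasing Lipschitz approximation $f_{a,\epsilon}$ of $\mathbf{1}_{\{\cdot>a\}}$, equal to $0$ on $(-\infty,a]$, equal to $1$ on $[a+\epsilon,\infty)$, and linear in between, so that $\lip(f_{a,\epsilon})=1/\epsilon$ and $|\mathbf{1}_{\{X>a\}}-f_{a,\epsilon}(X)|\le \mathbf{1}_{\{a<X<a+\epsilon\}}$; define $g_{b,\epsilon}$ analogously. Splitting $\mathbf{1}_{\{X>a\}}=f_{a,\epsilon}(X)+R_X$ and $\mathbf{1}_{\{Y>b\}}=g_{b,\epsilon}(Y)+R_Y$ and expanding the covariance, the main term is controlled by \eqref{Newman2} (taken with $m=2$, the two functions depending on a single coordinate each) as
\begin{align*}
\left|\Cov\!\left(f_{a,\epsilon}(X),\,g_{b,\epsilon}(Y)\right)\right|\le \frac1{\epsilon^2}\,\Cov(u(t,x),\,u(t,y)).
\end{align*}
Since all four random variables lie in $[0,1]$ and $\E|R_X|,\E|R_Y|\le \P\{a<X<a+\epsilon\}\le C_0\epsilon^{\alpha_0}$ by Assumption 1 (this is exactly where $a,b\ge K_0$ enters), each of the three remaining cross terms $\Cov(f_{a,\epsilon}(X),R_Y)$, $\Cov(R_X,g_{b,\epsilon}(Y))$, $\Cov(R_X,R_Y)$ is bounded by a constant multiple of $\epsilon^{\alpha_0}$, using the elementary estimate $|\Cov(A,B)|\le 2\|A\|_\infty\,\E|B|$.

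Combining these estimates yields, uniformly in $a,b\ge K_0$,
\begin{align*}
\Cov\!\left(\mathbf{1}_{\{X>a\}},\,\mathbf{1}_{\{Y>b\}}\right)\le \epsilon^{-2}\,\Cov(u(t,x),u(t,y))+c\,\epsilon^{\alpha_0}.
\end{align*}
Optimizing the free parameter by choosing $\epsilon\asymp\Cov(u(t,x),u(t,y))^{1/(2+\alpha_0)}$ balances the two terms and produces the exponent $\alpha_0/(2+\alpha_0)$, giving \eqref{eq:joint}. I expect the only real work to lie in the bookkeeping of the three error covariances and in confirming that the anti-concentration bound of Assumption 1 applies on the correct range; the structural steps—the indicator-covariance identity, the application of \eqref{Newman2} after Lipschitz smoothing, and the scaling optimization—are the heart of the argument and become routine once the approximation is set up.
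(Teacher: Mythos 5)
Your proposal is correct and follows essentially the same route as the paper: replace the indicators by piecewise-linear Lipschitz approximations with slope $1/\epsilon$, bound the smooth covariance via \eqref{Newman2}, control the remainder covariances by the anti-concentration bound of Assumption~1 (using stationarity to reduce to $u(t,0)$), and optimize $\epsilon\asymp\Cov(u(t,x),u(t,y))^{1/(2+\alpha_0)}$. The only cosmetic difference is that you work with the nondecreasing indicators $\mathbf{1}_{\{\cdot>a\}}$ while the paper smooths $\mathbf{1}_{\{\cdot\le a\}}$ directly; since $\Cov(1-A,1-B)=\Cov(A,B)$ this changes nothing.
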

\begin{proof}
           The proof is a modification of that of \cite[Theorem 6.2.15]{PR12}.  
                      For $a, \delta>0$, we define the function
           \begin{align*}
           h_{a, \delta}(z)=
           \begin{cases}
           1,& \text{if $z \leq a$}\\
           \frac{a+\delta- z}{\delta},& \text{if $z\in [a, a+\delta]$}\\
           0, &\text{if $z>a+\delta$}.
           \end{cases}
           \end{align*}
           For $a, b\geq K_0$, we write
           \begin{align*}
           &\P\left\{u(t,x) \leq a; u(t,y)\leq b\right\}-  
           \P\left\{u(t,x) \leq a\right\} \P\left\{ u(t,y)\leq b\right\}\\
           &\qquad =\Cov(\bm{1}_{\{u(t,x)\leq a\}}\,, \bm{1}_{\{u(t,y)\leq b\}})\\
           &\qquad = \Cov(h_{a, \delta}(u(t,x))\,, h_{b, \delta}(u(t,y))) \\
           &\qquad\quad + 
           \Cov(\bm{1}_{\{u(t,x)\leq a\}} - h_{a, \delta}(u(t,x))\,, \bm{1}_{\{u(t,y)\leq b\}} )\\
           &\qquad \quad +            \Cov(h_{a, \delta}(u(t,x)) \,, \bm{1}_{\{u(t,y)\leq b\}}-h_{b, \delta}(u(t,y)) ).
           \end{align*}
           Because $u$ is associated, by the inequality \eqref{Newman2},
           \begin{align*}
            \Cov(h_{a, \delta}(u(t,x))\,, h_{b, \delta}(u(t,y))) & \leq \lip((h_{a, \delta})\lip(h_{b, \delta}) \Cov(u(t,x)\,, u(t,y))\\
            &=\delta^{-2}\Cov(u(t,x)\,, u(t,y)). 
           \end{align*}
           Since $|\bm{1}_{\{u(t,x)\leq a\}} - h_{a, \delta}(u(t,x))| \leq \bm{1}_{\{a\leq u(t,x)\leq a+\delta\}}$, we have
           \begin{align*}
           |\Cov(\bm{1}_{\{u(t,x)\leq a\}} - h_{a, \delta}(u(t,x))\,, \bm{1}_{\{u(t,y)\leq b\}} )| \leq 2\P\{u(t,0)\in [a, a+\delta]\}.
           \end{align*}
           Similarly, 
           \begin{align*}
           |\Cov(h_{a, \delta}(u(t,x)) \,, \bm{1}_{\{u(t,y)\leq b\}}-h_{b, \delta}(u(t,y)) )| \leq 2\P\{u(t,0)\in [b, b+\delta]\}.
           \end{align*}
           The preceding yields for all $\delta>0$
           \begin{align*}
           &\P\left\{u(t,x) \leq a; u(t,y)\leq b\right\}-  
           \P\left\{u(t,x) \leq a\right\} \P\left\{ u(t,y)\leq b\right\}\\
           &\qquad\qquad\qquad \leq \delta^{-2}\Cov(u(t,x)\,, u(t,y)) + 2\P\{u(t,0)\in [a, a+\delta]\}\\
           &\qquad\qquad\qquad\quad +2\P\{u(t,0)\in [b, b+\delta]\}\\
           &\qquad\qquad\qquad \leq  \delta^{-2}\Cov(u(t,x)\,, u(t,y)) + 4C_0\delta^{\alpha_0},
           \end{align*}
           where the last inequality is due to \eqref{eq:condition}.  We choose
           \begin{align*}
           \delta = \Cov(u(t,x)\,, u(t, y))^{1/(2+\alpha_0)}
           \end{align*}
           to obtain \eqref{eq:joint}.
\end{proof}

\begin{proposition}
           Almost surely,
          \begin{align}\label{eq:liminf}
        \liminf_{N\to\infty} \frac{\max_{0\leq x\leq N}u(t,x)}{\log N} \geq c_1 t^{1/2},
     \end{align}
     where $c_1>0$ is the constant in \eqref{eq:tail}.
\end{proposition}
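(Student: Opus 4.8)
The plan is to reduce the continuous maximum to a maximum over the integer lattice and then run a second moment argument on the number of ``high points'', using the joint probability bound \eqref{eq:joint} to control the positive correlations and Lemma \ref{lem:cov} to make those correlations summable. Since $\max_{0\le x\le N}u(t,x)\ge \max_{1\le j\le N}u(t,j)$ for integer $N$, it suffices to bound the discrete maximum from below. Fix a level $\beta>0$ and set the threshold $z_N=\beta\log N$; since $z_N\to\infty$ we will have $z_N\ge K_0$ for $N$ large, so Assumption 1 and hence \eqref{eq:joint} apply. Define the exceedance count
\[
S_N=\sum_{j=1}^N \bm{1}_{\{u(t,j)>z_N\}}.
\]
By stationarity (Remark \ref{rem:sta}(2)) each summand has the law of $\bm{1}_{\{u(t,0)>z_N\}}$, and the event $\{\max_{1\le j\le N}u(t,j)\le z_N\}$ is exactly $\{S_N=0\}$, so Chebyshev's inequality gives $\P\{S_N=0\}\le \Var(S_N)/(\E[S_N])^2$.

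Next I would lower bound the first moment and upper bound the second moment. From the lower tail in \eqref{eq:tail}, for $N$ large,
\[
\E[S_N]=N\,\P\{u(t,0)>z_N\}\ge N\,\e^{-c_1 t^{-1/2}z_N}=N^{\,1-c_1 t^{-1/2}\beta},
\]
which diverges precisely when $\beta$ stays below the natural critical scale $t^{1/2}/c_1$ at which $\E[S_N]$ ceases to blow up. For the variance, I would write $\bm{1}_{\{u(t,j)>z_N\}}=1-\bm{1}_{\{u(t,j)\le z_N\}}$ and use that covariance is invariant under $X\mapsto 1-X$; then for $j\ne k$ the joint probability estimate \eqref{eq:joint} with $a=b=z_N\ge K_0$ yields
\[
\Cov\big(\bm{1}_{\{u(t,j)>z_N\}},\bm{1}_{\{u(t,k)>z_N\}}\big)\le C\,\Cov(u(t,j),u(t,k))^{\alpha_0/(2+\alpha_0)}.
\]
By stationarity and Lemma \ref{lem:cov} (taking $T=t$), $\Cov(u(t,j),u(t,k))\le C_t\,\e^{-(j-k)^2/(4t)}$, so these off-diagonal covariances decay like $\e^{-\kappa(j-k)^2}$ with $\kappa=\alpha_0/\big(4t(2+\alpha_0)\big)$. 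Since $\sum_{m\ne 0}\e^{-\kappa m^2}<\infty$ and the diagonal contributes at most $\E[S_N]\le N$, I get $\Var(S_N)=O(N)$, and hence $\P\{S_N=0\}\le C_* N/(\E[S_N])^2$ decays at a polynomial rate in $N$ as long as $\beta$ is below the critical scale.

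Finally I would promote this into an almost sure statement by Borel--Cantelli along a polynomial subsequence. Choosing $N_m=m^p$ with $p$ large makes $\sum_m \P\{\max_{1\le j\le N_m}u(t,j)\le z_{N_m}\}<\infty$, so almost surely eventually $\max_{0\le x\le N_m}u(t,x)>\beta\log N_m$. Because $N\mapsto \max_{0\le x\le N}u(t,x)$ is nondecreasing and $\log N_m/\log N_{m+1}\to 1$, filling in the gaps $N\in[N_m,N_{m+1}]$ gives $\liminf_{N\to\infty}\max_{0\le x\le N}u(t,x)/\log N\ge \beta$ almost surely; letting $\beta$ increase to the target level along a sequence then yields \eqref{eq:liminf}.

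I expect the variance control to be the main obstacle, and it is exactly where associativity enters in the right quantitative form. Association alone forces the events $\{u(t,j)\le z_N\}$ to be \emph{positively} correlated, which is the wrong direction for bounding $\P\{S_N=0\}$ from above; it is the quantitative decorrelation bound \eqref{eq:joint} (available only under Assumption 1), combined with the Gaussian-in-distance covariance decay of Lemma \ref{lem:cov}, that renders the correlations summable and lets the second moment method close. A secondary subtlety is that the Chebyshev step loses a constant factor in the exponent, so attaining the precise constant in \eqref{eq:liminf} requires choosing $\beta$ and, if needed, a lattice spacing that effectively decouples the sampled points, so that they behave like independent copies of $u(t,0)$ at the critical scale.
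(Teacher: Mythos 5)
Your overall architecture (discretize in space, use \eqref{eq:tail} for the marginal tail, \eqref{eq:joint} plus Lemma \ref{lem:cov} for decorrelation, Borel--Cantelli, then monotonicity of the maximum to fill in non-integer $N$) matches the paper's, but the quantitative core fails: the second-moment bound on the unit lattice cannot reach the critical constant. With $z_N=\beta\log N$ and $N$ sample points you have $\E[S_N]\ge N^{1-c_1t^{-1/2}\beta}$, while $\Var(S_N)\le \E[S_N]+C'N$: the bound that \eqref{eq:joint} gives for nearest-neighbour pairs is a fixed constant that does not shrink with $N$, so the off-diagonal sum is genuinely of order $N$, not smaller. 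Hence
\[
\P\{S_N=0\}\le \frac{\Var(S_N)}{(\E[S_N])^2}\lesssim N^{2c_1t^{-1/2}\beta-1},
\]
which tends to $0$ only for $\beta<t^{1/2}/(2c_1)$ --- a factor-of-two loss. Your claim that this ``decays at a polynomial rate as long as $\beta$ is below the critical scale'' is false for $\beta\in[t^{1/2}/(2c_1),\,t^{1/2}/c_1)$, so the argument as written proves \eqref{eq:liminf} only with half the intended constant. You flag exactly this issue in your final sentence but leave it unresolved, and it is not a cosmetic point: it is the step the whole proposition hinges on.

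The repair, which is what the paper does, is to sample only $\floor*{N^a}$ points $x_j=jN/\floor*{N^a}$, spaced $N^{1-a}$ apart, with $\tilde c\,c_1t^{-1/2}<a<1$. The paper then avoids Chebyshev entirely: it bounds $\P\{\max_j u(t,x_j)\le \tilde c\log N\}$ by the product of the marginals, $\bigl(1-N^{-\tilde c c_1t^{-1/2}}\bigr)^{\floor*{N^a}}\le\exp\bigl(-\floor*{N^a}N^{-\tilde c c_1t^{-1/2}}\bigr)$, plus an additive correction $\sum_{j<k}\bigl[\P\{A_j\cap A_k\}-\P\{A_j\}\P\{A_k\}\bigr]$ furnished by association (\cite[(11)]{Pu25}); that correction is at most $N^{2a}\e^{-cN^{2(1-a)}}$ because the covariance at distance $N^{1-a}$ is Gaussian-small by Lemma \ref{lem:cov}. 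Both terms are summable over all integers $N$, so no subsequence is needed. If you prefer to keep the second-moment method, the same sparse lattice rescues it too: the off-diagonal variance becomes $o(1)$ and the diagonal gives $\P\{S_N=0\}\lesssim 1/\E[S_N]=N^{\tilde c c_1t^{-1/2}-a}\to0$ for any $\tilde c<t^{1/2}/c_1$, at which point your polynomial subsequence plus monotonicity argument does close the proof. Note finally that the constant either route actually produces is $t^{1/2}/c_1$ (matching the paper's choice $\tilde c\in(0,t^{1/2}/c_1)$), not the $c_1t^{1/2}$ displayed in \eqref{eq:liminf}; that discrepancy is in the paper's statement, not a defect of your plan.
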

\begin{proof} 
            As in the proof of \cite[Theorem 1.1]{CJK13}, we will use Borel-Cantelli lemma to prove \eqref{eq:liminf}.
           Choose $\tilde{c}\in (0, t^{1/2}/c_1)$.  Let $a\in (0, 1)$ such that
           $a>\tilde{c}c_1 t^{-1/2}$. Define $x_j= j N/\floor{N^a}$ for $j= 1, \ldots, \floor{N^a}$.
           Assume that $N$ is sufficiently large so that $\tilde{c}\log N$ is larger than $K_0$, where $K_0$ is the constant in 
           Assumption 1.
           We write
           \begin{align*}
           &\P\left\{ \max_{0\leq x\leq N} u(t,x)  \leq \tilde{c} \log N\right\} \leq           
            \P\left\{ \max_{1\leq j\leq \floor{N^a}} u(t,x_j)  \leq \tilde{c} \log N\right\}\\
            &\quad =            \P\left\{ \max_{1\leq j\leq \floor{N^a}} u(t,x_j)  \leq \tilde{c} \log N\right\}
            -\prod_{j=1}^{\floor{N^a}}\P\left\{u(t,x_j)  \leq \tilde{c} \log N\right\}\\
            &\qquad + \left(1-\P\left\{u(t,0)  > \tilde{c} \log N\right\}\right)^{\floor{N^a}}\\
            &\quad :=\mathcal{P}_{N,1} + \mathcal{P}_{N,2}.
           \end{align*}
           Using \eqref{eq:tail} and the inequality $1-z \leq \e^{-z}$ for all $z\geq0$, we see that
           \begin{align*}
           \mathcal{P}_{N,2} &\leq \left(1- \e^{-\tilde{c}c_1t^{-1/2}\log N}\right)^{\floor{N^a}} = \left(1- N^{-\tilde{c}c_1t^{-1/2}}\right)^{\floor{N^a}}\\
           &\leq \exp\left(-\floor{N^a}N^{-\tilde{c}c_1t^{-1/2}}\right).
           \end{align*}
           Since $\tilde{c}c_1 t^{-1/2}<a$, it follows that
           \begin{align*}
           \sum_{N=1}^\infty \mathcal{P}_{N,2} <\infty.
           \end{align*}
           
           We proceed to estimate $\mathcal{P}_{N,1}$. Since $u$ is 
associated, by \cite[(11)]{Pu25} we have
           \begin{align*}
           \mathcal{P}_{N,1}& \leq \sum_{1\leq j<k\leq \floor{N^a}} \big(\P\left\{u(t, x_j)\leq \tilde{c}\log N; 
           u(t, x_k)\leq \tilde{c}\log N\right\} \\
           &\qquad \qquad \qquad \quad- \P\left\{u(t, x_j)\leq \tilde{c}\log N\right\}\P\left\{
           u(t, x_k)\leq \tilde{c}\log N\right\}\big)\\
           & \leq C\sum_{1\leq j<k\leq \floor{N^a}} \Cov(u(t, x_j)\,, u(t, x_k))^{\alpha_0/(2+\alpha_0)},
           \end{align*}
           where the second inequality holds by \eqref{eq:joint}. Because $|x_j-x_k|\geq N^{1-a}$ for $j< k$, by Lemma \ref{lem:cov}, 
           \begin{align*}
           \Cov(u(t, x_j)\,, u(t, x_k)) \lesssim \e^{-\frac{N^{2(1-a)}}{4T}}.
           \end{align*}
           Thus, 
           \begin{align*}
           \mathcal{P}_{N,1} \lesssim  \sum_{1\leq j<k\leq \floor{N^a}}  \e^{-\frac{\alpha_0}{2+\alpha_0}\frac{N^{2(1-a)}}{4T}}
           \leq N^{2a}  \e^{-\frac{\alpha_0}{2+\alpha_0}\frac{N^{2(1-a)}}{4T}}
           \end{align*}
           and hence we have
           \begin{align*}
           \sum_{N=1}^{\infty}\mathcal{P}_{N,1} <\infty.
           \end{align*}
           Therefore, we obtain 
           \begin{align*}
           \sum_{N=1}^\infty \P\left\{ \max_{0\leq x\leq N} u(t,x)  \leq \tilde{c} \log N\right\}  <\infty.
           \end{align*}
           By Borel-Cantelli lemma, it follows that almost surely
           \begin{align}\label{eq:inf}
           \liminf_{N\to\infty}\frac{ \max_{0\leq x\leq N} u(t,x)}{\log N} \geq \tilde{c}.
           \end{align}
          The limit in \eqref{eq:inf} is taken along integer $N$. Because the quantity $\max_{0\leq x\leq N} u(t,x)$ is 
          increasing in $N$, the limit in \eqref{eq:inf} also holds along real numbers $N$.
           Finally, we let $\tilde{c}$ increase to $c_1t^{1/2}$ to obtain \eqref{eq:liminf}.
\end{proof}

\begin{appendix}
\section{}
The following lemma might be available somewhere in the literature.  We include the proof for reader's convenience.

\begin{lemma}\label{Gauss-linear}
          If for all $a_1, \ldots, a_k\in \R_+$, the linear 
          combination $a_1X_1+\ldots + a_kX_k$ is a centered Gaussian random variable, then $(X_1, \ldots, X_k)$ is a centered 
          Gaussian random
          vector. 
\end{lemma}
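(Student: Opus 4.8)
The plan is to show that the joint moment generating function (MGF) of $(X_1,\dots,X_k)$ coincides with that of a centered Gaussian vector, and then to invoke uniqueness of the MGF. First I would record the consequences of the hypothesis. Taking $a=e_j$ (the $j$-th standard basis vector, which lies in $\R_+^k$) shows that each $X_j$ is a centered Gaussian, hence square-integrable; in particular $\E[X_j]=0$ and the matrix $C=(C_{j\ell})$ with $C_{j\ell}=\E[X_jX_\ell]$ is well-defined, symmetric and positive semidefinite. For any $a\in\R_+^k$ the variable $\sum_j a_jX_j$ is centered Gaussian with variance $a^\top C a=\sum_{j,\ell}a_ja_\ell C_{j\ell}$; evaluating its (Gaussian) MGF at the argument $1$ gives
\begin{align*}
M(a):=\E\Big[\exp\Big(\sum_{j=1}^k a_jX_j\Big)\Big]=\exp\Big(\tfrac12\, a^\top C a\Big),\qquad a\in\R_+^k.
\end{align*}
Thus $M$ and the function $a\mapsto\exp(\tfrac12 a^\top C a)$ agree on the open positive orthant.

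Next I would verify that $M(t)$ is finite for every $t\in\R^k$, not merely on the orthant. This follows from the generalized H\"older inequality with all exponents equal to $k$, namely $\E[\prod_j \e^{t_jX_j}]\le\prod_j(\E[\e^{kt_jX_j}])^{1/k}$, where each factor is finite because $X_j$ is Gaussian and its MGF is finite on all of $\R$. Consequently $M$ is finite on the whole of $\R^k$, and therefore real-analytic there (equivalently, it extends to an entire function on $\C^k$): an MGF that is finite on an open set is analytic on that set, with derivatives obtained by differentiating under the expectation.

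Finally I would propagate the identity from the orthant to all of $\R^k$. Both $M$ and $a\mapsto\exp(\tfrac12 a^\top C a)$ are real-analytic on the connected set $\R^k$ and agree on the nonempty open positive orthant; by the identity theorem for real-analytic functions (all partial derivatives match at an interior point of the orthant, and each function equals its Taylor expansion there), the two coincide on all of $\R^k$. Hence $M(t)=\exp(\tfrac12 t^\top C t)$ for every $t\in\R^k$, which is exactly the MGF of the centered Gaussian law $\mathrm{N}(0,C)$. Since a probability distribution is determined by an MGF that is finite in a neighborhood of the origin, $(X_1,\dots,X_k)$ is a centered Gaussian vector with covariance $C$, as claimed.

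I expect the main obstacle to be precisely the sign restriction in the last step: the hypothesis supplies only nonnegative coefficients, so one cannot directly form linear combinations of mixed sign (a difference of the given Gaussians need not be Gaussian a priori --- that is what is being proved). The device that removes this obstruction is to pass from the characteristic function, which need not be analytic, to the moment generating function, whose finiteness everywhere (guaranteed by the Gaussianity of each coordinate) furnishes the analyticity needed to carry the Gaussian formula off the orthant and onto all of $\R^k$.
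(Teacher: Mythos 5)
Your proof is correct, but it takes a different technical route from the paper's. The paper stays with the characteristic function: fixing $\theta\in\R$ and all but one coefficient, it complexifies the remaining coefficient, defines $F_1(z)=\E[\e^{\mathrm{i}\theta(zX_1+a_2X_2+\cdots+a_kX_k)}]$, proves by hand (Taylor expansion plus dominated convergence, using the finiteness of $\E[\e^{c|X_1|}]$ for Gaussian $X_1$) that $F_1$ is entire, matches it on $[0,\infty)$ with the entire function $\e^{-\frac12\theta^2\Var(zX_1+\cdots)}$, invokes the one-variable identity principle, and then iterates coordinate by coordinate. You instead work with the joint real moment generating function $M(t)=\E[\e^{\langle t,X\rangle}]$: the generalized H\"older bound gives finiteness of $M$ on all of $\R^k$, analyticity of an MGF on the interior of its domain of finiteness gives real-analyticity, and the several-variable identity theorem propagates the Gaussian formula $\exp(\frac12 t^\top Ct)$ from the positive orthant to all of $\R^k$ in one step. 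Both arguments exploit exactly the same engine --- the Gaussianity of the marginals supplies the exponential integrability that makes analytic continuation off the orthant possible --- but your version is shorter at the cost of importing two standard facts (analyticity of MGFs on open sets of finiteness, and that an MGF finite near the origin determines the law), whereas the paper's characteristic-function version is essentially self-contained and ends directly with the characteristic function, which determines the law with no further appeal. One small point worth making explicit in your write-up: the cross-moments $\E[X_jX_\ell]$ defining $C$ are finite by Cauchy--Schwarz since each $X_j\in L^2$, so $a^\top Ca$ is indeed the variance of $\sum_j a_jX_j$ for $a$ in the orthant; with that noted, the argument is complete.
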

\begin{proof}
          The condition ensures that $X_j$ is a centered Gaussian random variable for $j=1, \ldots, k$.
          We need show that for all $a_1, \ldots, a_k\in \R$, the linear 
          combination $a_1X_1+\ldots + a_kX_k$ is a Gaussian random variable. 
          Fix $a_2, \ldots, a_k \in \R_+$  and $\theta\in \R$. Define
          \begin{align*}
          F_1(z)= \E\left[\e^{{\rm i}\theta (zX_1+a_2X_2+\ldots+ a_kX_k)}\right],\quad z\in \C.
          \end{align*}
          For $z=x+\i y$, $x,y\in\R$, since $X_1$ is a centered Gaussian random variable and
          \begin{align*}
          \left| \e^{{\rm i}\theta (zX_1+a_2X_2+\ldots+a_kX_k)} \right| =\e^{-\theta yX_1}
          \end{align*}
          we see that $F_1(z)$ is well defined for all $z\in \C$. Define
          \begin{align*}
          G_1(z)= \e^{-\frac12\theta^2\Var(zX_1+a_2X_2+\ldots+ a_kX_k)}, \quad z\in \C.
          \end{align*}
          It is clear that $G_1$ is analytic on $\C$ and the condition ensures that for all $\theta\in \R$
          \begin{align}\label{ID}
          F_1(a)=G_1(a), \quad \text{for all $a\geq0$}. 
          \end{align}
          We proceed to show that for each fixed $\theta\in \R$, $F_1$ is 
analytic on $\C$. Fix $z_0=x_0+{\rm i} y_0$. 
          Then for $0<|z-z_0|<1$,
          \begin{align*}
          &\frac{F_1(z)-F_1(z_0)}{z-z_0} - \E\left[ {\rm i} \theta X_1\e^{ {\rm i}  \theta(z_0X_1+a_2X_2+\ldots +a_kX_k)} \right] \\
          &\quad
          = \frac{1}{z-z_0}
          \E\Big[ \e^{{\rm i}\theta (zX_1+a_2X_2+\ldots+a_kX_k)} - \e^{{\rm i}\theta (z_0X_1+a_2X_2+\ldots+a_kX_k)}  \\
          &\qquad\qquad\qquad\qquad\qquad\quad
          -(z-z_0) {\rm i}\theta X_1\e^{{\rm i} \theta(z_0X_1+a_2X_2+\ldots +a_kX_k)}
          \Big]\\
          &\quad=  \frac{\E\left[ \e^{{\rm i}\theta (z_0X_1+a_2X_2+\ldots+a_kX_k)}  \left( \e^{{\rm i}\theta (z-z_0)X_1} -1
          -(z-z_0)  {\rm i} \theta X_1\right)
          \right]}{z-z_0}.
          \end{align*}
          Using the Taylor series, we have
          \begin{align*}
            \frac{ \e^{{\rm i}\theta (z-z_0)X_1} -1
          -(z-z_0)  {\rm i}  \theta X_1}{z-z_0}= \sum_{n=2}^\infty \frac{( {\rm i}  \theta X_1)^n(z-z_0)^{n-1}}{n!}.          
          \end{align*}
          Therefore, 
          \begin{align*}
          &\left|  \frac{ \e^{{\rm i}\theta (z_0X_1+a_2X_2+\ldots+a_kX_k)}  \left( \e^{{\rm i}\theta (z-z_0)X_1} -1
          -(z-z_0)  {\rm i}  \theta X_1\right)
          }{z-z_0}  \right|\\
          &\qquad\qquad \leq \e^{-y_0X_1}|\theta X_1| \sum_{n=0}^\infty \frac{|\theta X_1|^n|z-z_0|^{n}}{n!}
          =  \e^{-y_0X_1}|\theta X_1| \e^{|\theta| |z-z_0||X_1|}\\
          &\qquad\qquad \leq \e^{-y_0X_1}|\theta X_1| \e^{|\theta||X_1|}.
          \end{align*}
          Since $X_1$ is a Gaussian random variable, the random variable in the above line has finite expectation. Hence, by dominated
          convergence theorem, 
          \begin{align*}
          \lim_{z\to z_0}\frac{F_1(z)-F_1(z_0)}{z-z_0} = \E\left[  {\rm i}  \theta X_1\e^{ {\rm i}  \theta(z_0X_1+a_2X_2+\ldots+a_kX_k)} \right],
          \end{align*}
          which implies that $F$ is is analytic on $\C$. Hence, by \eqref{ID} and identity principle,  for all fixed $a_2, \ldots, a_k \in \R_+$ and $\theta\in \R$
          \begin{align*}
          F_1(z)= G_1(z), \quad \text{for all $z\in \C$}.
          \end{align*}
          In particular, we have for all $a_1, \theta\in \R$ and $a_2, \ldots, a_k\in \R_+$,
          \begin{align*}
           \E\left[\e^{{\rm i}\theta (a_1X_1+a_2X_2+\ldots +a_kX_k)}\right] = \e^{-\frac12\theta^2\Var(a_1X_1+a_2X_2+\ldots+a_kX_k)}.
          \end{align*}
          
          Similarly, we fix $a_1, \theta\in \R$ and $a_3, \ldots, a_k \in \R_+$ and define
          \begin{align*}
          F_2(z)=  \E\left[\e^{{\rm i}\theta (a_1X_1+zX_2+\ldots +a_kX_k)}\right],\quad z\in \C.
          \end{align*}
          By the same arguments as before, we can show that for all $a_1, a_2, \theta\in \R$ and $a_3, \ldots, a_k\in \R_+$, 
          \begin{align}\label{eq:Gauss}
          \E\left[\e^{{\rm i}\theta (a_1X_1+a_2X_2+\ldots +a_kX_k)}\right] = \e^{-\frac12\theta^2\Var(a_1X_1+a_2X_2+\ldots+a_kX_k)}.
          \end{align}
          We can repeat to conclude that \eqref{eq:Gauss} holds for all $a_1, \ldots, a_k\in \R$ and $\theta$. Therefore, $(X_1, \ldots, X_k)$
          is a centered Gaussian random vector.         
\end{proof}

\end{appendix}

\noindent\textbf{Acknowledgement}.  
FP was supported in part by  National Natural Science Foundation of China (No.12571153).
This material is based upon work supported by the National Science
Foundation under Grant No. DMS-2424139, while the authors were in
residence at the Simons Laufer Mathematical Sciences Institute in
Berkeley, California, during the Fall 2025 semester.


\begin{thebibliography}{999}

\bibitem{BaZ24}
Balan, M. R. and Zheng, G. (2024).
Hyperbolic Anderson model with L\'{e}vy white noise: spatial ergodicity and fluctuation.
{\it Trans. Amer. Math. Soc.} {\bf 377} 4171--4221.


\bibitem{BNZ21}
Bolaños Guerrero, R., Nualart, D. and Zheng, G. (2021). Averaging 2d stochastic wave equation. {\it Electron. J. Probab.} {\bf 26},  pp.1-32.

\bibitem{BuS98}
Bulinskiĭ, A. V. and Shabanovich, \`E. (1998).
Asymptotic behavior of some functionals of positively and negatively dependent random fields. {\it Fundam. Prikl. Mat.}
{\bf 4} 479--492.


\bibitem{CKNP21}
Chen, L., Khoshnevisan, D., Nualart, D. and Pu, F. (2021)
Spatial ergodicity for SPDEs via Poincar\'{e}-type inequalities.
{\it Electron. J. Probab.} {\bf26}  Paper No. 140, 37 pp.

\bibitem{CKNP23}
Chen, L., Khoshnevisan, D., Nualart, D. and Pu, F. (2023).
Central limit theorems for spatial averages of the stochastic heat equation via Malliavin-Stein's method.
{\it Stoch. Partial Differ. Equ. Anal. Comput.} {\bf11}  no. 1, 122–176.

\bibitem{CX23}
Chen, L. and Xia, P.:
Asymptotic properties of stochastic partial differential equations in the sublinear regime. arXiv:2306.06761 (2023)


\bibitem{CJK13}
Conus, D.,  Joseph, M. and Khoshnevisan, D. (2013)
On the chaotic character of the stochastic heat equation, before the onset of intermitttency.
{\it Ann. Probab.} {\bf 41}  no. 3B, 2225–2260.

\bibitem{Dal99}
Dalang, R.C. (1999)
Extending the martingale measure stochastic integral with applications to spatially homogeneous s.p.d.e.'s. 
{\it Electron. J. Probab.} {\bf 4}  no. 6, 29 pp.

\bibitem{DNZ20}
Delgado-Vences, F., Nualart, D., and Zheng, G. (2020). A central limit theorem for the stochastic wave equation with fractional noise. {\it Ann. Inst. Henri Poincar\'e Probab. Stat.}  {\bf 56}  no. 4, 3020–3042.3020-3042.

\bibitem{Ebi24}
Ebina, M. (2024).
Central limit theorems for nonlinear stochastic wave equations in dimension three.
{\it Stoch. Partial Differ. Equ. Anal. Comput.} {\bf 12} no. 2, 1141–1200.

\bibitem{Ebi25}
Ebina, M. (2025).
Central limit theorems for stochastic wave equations in high dimensions.
{\it Electron. J. Probab.} {\bf 30} Paper No. 60, 56 pp.

\bibitem{Ebi25b}
Ebina, M. (2024). Large deviations for a spatial average of stochastic heat and wave equations.
arXiv:2409.15624

\bibitem{EPW67}
	Esary, J. D., Proschan, F. and Walkup, D. W. (1967).
	Association of random variables with applications.
	{\it Ann.\ Math.\ Statist.}\ {\bf 38} 1466--1474. 


%\bibitem{Eva10}
%Evans, L. C. (2010).
%{\it Partial differential equations.}
%Second edition
%Grad. Stud. Math., 19
%American Mathematical Society, Providence, RI. xxii+749 pp.

\bibitem{HWXZ24}
Hu, Y., Wang, X., Xia, P. and Zheng, J. (2024)
Moment asymptotics for super-Brownian motions.
{\it Bernoulli} {\bf30}  no. 4, 3119–3136.


\bibitem{HNV20}
Huang, J.,  Nualart, D. and Viitasaari, L. (2020)
 A central limit theorem for the stochastic heat equation.
{\it Stochastic Process. Appl.} {\bf 130}  no. 12, 7170–7184.

	\bibitem{HNVZ20}
	Huang, J., Nualart, D., Viitasaari, L., Zheng, G. (2020). Gaussian fluctuations for the stochastic heat equation with colored noise.
{\it Stoch. Partial Differ. Equ. Anal. Comput.} {\bf8} no. 2, 402–421.


\bibitem{LP23} Li, Z. and Pu, F. (2023) Gaussian fluctuation for spatial average of super-Brownian motion. 
 \textit{Stoch. Anal. Appl.} {\bf 41} no.4, 752--769 
 
 \bibitem{Mue91}
 Mueller, C. (1991).
On the support of solutions to the heat equation with noise.
{\it Stochastics Stochastics Rep}. {\bf 37} no. 4, 225–245.

\bibitem{MP92}
Mueller, C. and Perkins, E. A. (1992)
The compact support property for solutions to the heat equation with noise.
{\it Probab. Theory Related Fields} {\bf93} no. 3, 325–358.

\bibitem{Myt98}
Mytnik, L. (1998)
Weak uniqueness for the heat equation with noise.
{\it Ann. Probab.} {\bf 26} no. 3, 968–984.


\bibitem{MP11}
Mytnik, L. and Perkins, E. (2011)
Pathwise uniqueness for stochastic heat equations with H\"older continuous coefficients: the white noise case.
{\it Probab. Theory Related Fields} {\bf 149} no. 1-2, 1–96.

\bibitem{MPS06}
Mytnik, L.,  Perkins, E. and Sturm, A. (2006).
On pathwise uniqueness for stochastic heat equations with non-Lipschitz coefficients.
{\it Ann. Probab.} {\bf34} no. 5, 1910–1959


	\bibitem{NXZ22}
	Nualart, D., Xia, P.,  Zheng, G. (2022). Quantitative central limit theorems for the parabolic Anderson model driven by colored noises. {\it Electron. J. Probab.} {\bf 27}  Paper No. 120, 43 pp.

\bibitem{NZ20}
Nualart, D. and Zheng, G. (2020).
Spatial ergodicity of stochastic wave equations in dimensions $1, 2$ and $3$.
{\it Electron. Commun. Probab.} {\bf 25} Paper No. 80, 11 pp.

\bibitem{New80}
Newman, C. M. (1980).
Normal fluctuations and the FKG inequalities.
{\it Comm. Math. Phys.} {\bf 74}  no. 2, 119–128.


\bibitem{Oli12}
Oliveira, P.E. (2012).
{\it Asymptotics for associated random variables}.
Springer, Heidelberg. x+194 pp.


\bibitem{Per02}
Perkins, E. (2002).
{\it Dawson-Watanabe superprocesses and measure-valued diffusions}. Lectures on probability theory and statistics (Saint-Flour, 1999), 125–324.
Lecture Notes in Math., 1781
Springer-Verlag, Berlin

\bibitem{PR12}
Prakasa Rao, B. L. S. (2012).
{\it Associated Sequences, Demimartingales and Nonparametric inference}.
Probability and its Applications. Birkh\"auser/Springer, Basel.


\bibitem{Pu25}
Pu, F. (2025). Ergodicity, CLT and asymptotic maximum of the Airy1 process. 
{\it Bernoulli} {\bf 31}  2624-2648


\bibitem{Shi94}
Shiga, T. (1994).
Two contrasting properties of solutions for one-dimensional stochastic partial differential equations.
{\it Canad. J. Math.} {\bf 46} no. 2, 415–437.


\bibitem{Wal86}
	Walsh, J. B. (1986).
	{\it An Introduction to Stochastic Partial Differential Equations.}
	\`Ecole d'\'et\'e de probabilit\'es de Saint-Flour, XIV-1984, 265--439.
	In: {\it Lecture Notes in Math.}\ {\bf 1180}, Springer, Berlin.

 \end{thebibliography}
\end{document}